\newcommand{\rd}{\mathrm{d}}
\newcommand{\ex}{\mathrm{ex}}
\newcommand{\norm}[1]{ \| #1 \|}
\newcommand{\Amat}{\mathsf{A}}
\newcommand{\Vmat}{\mathsf{V}}
\newcommand{\calG}{\mathcal{G}}
\newcommand{\half}{\frac{1}{2}}
\newcommand{\eps}{\varepsilon}
\theoremstyle{plain}\newtheorem{remark}{Remark}
\DeclareMathOperator*{\argmax}{arg\,max}
\DeclareMathOperator*{\argmin}{arg\,min}
\title{Differential-Equation Constrained Optimization With Stochasticity\thanks{Submitted to the editors \today.
\funding{Q.~Li is partially supported by ONR-N00014-21-1-214 and NSF-1750488. L.~Wang is partially supported by NSF grant DMS-1846854.
Y.~Yang is partially supported by ONR-N00014-24-1-2088.  Y.~Yang also acknowledges support from Dr.~Max R\"ossler, the Walter Haefner Foundation, and the ETH Z\"urich Foundation. }}}
\author{Qin Li\thanks{Department of Mathematics, University of Wisconsin-Madison, Madison, WI (\email{qinli@math.wisc.edu}).}
\and Li Wang\thanks{Department of Mathematics, University of Minnesota Twin Cities, Minneapolis, MN (\email{liwang@umn.edu}).}
\and Yunan Yang\thanks{Department of Mathematics,  Cornell University,  Ithaca, NY (\email{yunan.yang@cornell.edu})}}
\date{\today}
\begin{document}
\maketitle
\begin{abstract}
Most inverse problems from physical sciences are formulated as PDE-constrained optimization problems. This involves identifying unknown parameters in equations by optimizing the model to generate PDE solutions that closely match measured data. The formulation is powerful and widely used in many sciences and engineering fields. However, one crucial assumption is that the unknown parameter must be deterministic. In reality, however, many problems are stochastic in nature, and the unknown parameter is random. The challenge then becomes recovering the full distribution of this unknown random parameter. It is a much more complex task. In this paper, we examine this problem in a general setting. In particular, we conceptualize the PDE solver as a push-forward map that pushes the parameter distribution to the generated data distribution. This way, the SDE-constrained optimization translates to minimizing the distance between the generated distribution and the measurement distribution. We then formulate a gradient-flow equation to seek the ground-truth parameter probability distribution. This opens up a new paradigm for extending many techniques in PDE-constrained optimization to that for systems with stochasticity.
\end{abstract}

\begin{keywords}
inverse problem,  constrained optimization, Wasserstein gradient flow,  particle method,  push-forward map
\end{keywords}

\begin{MSCcodes}
65M32,  49Q22,  65M75,  65K10
\end{MSCcodes}

\section{Introduction}
We study the problem of inferring the random parameters in a differential equation (DE). In particular, we ask:

\medskip
\begin{center}
    \emph{How to recover the distribution of an unknown random parameter in a DE from that of measurements?}
\end{center}

\medskip
The problem comes from the fact that many differential equations are equipped with parameters that are random in nature. Even with a fixed set of boundary/initial conditions and measuring operators, the measurements are nevertheless random, with the randomness coming from different realizations of the parameter. Aligned with other inverse problems that infer unknown parameters from measurements, we now target to recover the distribution of the parameters from the distribution of measurements. Therefore, the problem under study is the stochastic extension of the deterministic inverse problems, particularly the partial differential equations (PDE)-constrained optimization.

In the deterministic and finite-dimensional setting, many problems are formulated as
\begin{equation}\label{eqn:G}
y =\mathcal{G}(u)\,,\quad\calG:\mathcal{A}\subset\mathbb{R}^m\to\mathcal{R}\subset\mathbb{R}^n\,.
\end{equation}
This is to feed a system, described by $\mathcal{G}$, with an $m$-dimensional parameter $u$, to produce the data $y\in\mathbb{R}^n$. The set $\mathcal{A}$ stands for all the admissible parameters, and $\mathcal{R}$ is the range of $\mathcal{G}$. The inverse problem is to revert the process. The model $\mathcal{G}$ is still given, but the goal is to infer the parameter $u$ using the measured data $y^\ast$. Potentially, $y^\ast$ contains measurement error.

Inverse problems present many challenges. To determine $u$ uniquely from $y^\ast$, one requires $\mathcal{G}$ to be injective and $y^\ast$ to lie within the range of $\mathcal{G}$. However, as elegantly summarized in~\cite{Stuart}, this is not typically the case in practice. Therefore, various techniques have been introduced to approximate the inference, and optimization is one of the most popular numerical strategies. Specifically, the goal is to find the configuration of $u$ that best matches the data $y^\ast$. Denoting $D$ as the metric or divergence applied to the data range $\mathcal{R}$, we search for $u^\ast$ such that:
\begin{equation}\label{eqn:argmin_pde}
u^\ast {\in}\,\text{argmin}\; D(y^\ast,\mathcal{G}(u))=:L(u)\,.
\end{equation}
Although this minimization formulation solves the original inverse problem when $y^\ast$ falls within the range of $\mathcal{G}$, it leaves the problem of ``invertibility'' unresolved. This is because this formulation may have many, or even infinitely many, minimum points. Furthermore, this formulation introduces an additional layer of difficulty regarding ``achievability'': even if the landscape of the loss function guarantees a unique minimizer, conventional optimization strategies may not be able to find it. Nonetheless, among the many optimization algorithms available, it is common to use gradient descent (GD) or other first-order optimization methods to search for the optimizer if a good initial guess is provided. In the continuous setting, the iteration number in GD is transformed to the time variable $s$, and the corresponding gradient flow is written as follows:
\begin{equation}\label{eqn:GD}
\frac{\rd u}{\rd s}=-\alpha\nabla_uL\,,
\end{equation}
where $\alpha$ signifies the rate and can be adjusted according to the user's preferences. When $u$ is a function, $\nabla_uL$ denotes the functional derivative of $L$ with respect to $u$. 

Many real-world problems can be formulated using~\eqref{eqn:argmin_pde}, such as PDE-constrained optimization problems. In such problems, $\mathcal{G}$ is the map induced by the underlying PDE, which maps the PDE parameter to the measurements. For instance, consider $f_i$ as the solution to a PDE characterized by the PDE operator $\mathcal{L}(u)$ that is parameterized by the coefficient $u$ with the $i$-th source term $S_i$, and let $\mathcal{M}_j$ denote its $j$-th measurement operator, $1\leq i \leq I$, $1\leq j \leq J$. Then, the measurement is an $I\times J$ matrix with its $ij$-th entry as $\left(\mathcal{G}(u)\right)_{ij}=\mathcal{M}_j(f_i)$. In this case, \eqref{eqn:argmin_pde} can be naturally represented as a PDE-constrained optimization problem:
\begin{equation}\label{opt:pde}
u^\ast {\in} \,\text{argmin} \frac{1}{IJ}\sum_{ij}|\mathcal{M}_j(f_i)-d_{ij}|^2\quad \text{s.t.}\quad  \mathcal{L}(u) [f_i] = S_i\,.
\end{equation}
As with the more general formulation~\eqref{eqn:argmin_pde}, 
the PDE-constrained optimization problem~\eqref{opt:pde} may not always be solvable. However, if we attempt to solve it using GD or its flow formulation~\eqref{eqn:GD}, the updating formula necessitates the computation of the gradient, which is typically done by solving both the forward and the adjoint equations: $\mathcal{L}(u) [f_i] =S_i$ and $\mathcal{L}^\ast(u) [g_j] = \psi_j$, with $ \psi_j$ determined by choice of the objective function. 
Over the years, this problem has attracted significant interest from various scientific communities, and many aspects of gradient computation have been investigated, see the book~\cite{hinze2008optimization}. 

Building upon this framework, we are interested in a class of problems where the unknown parameter $u$ is stochastic. Since $u$ is known to be random a priori, we aim to infer its distribution, denoted by $\rho_u$. Due to the inherent randomness in $u$, the measurement is also random. By using~\eqref{eqn:G}, we can write the distribution of data by regarding $\mathcal{G}$ as a push-forward map:
\[
\rho_y =\mathcal{G}_{\sharp}\rho_u\,.
\]
Consequently, the inverse problem is to infer the ground-truth distribution of $u$, denoted by $\rho_u^\ast$, using the measured data distribution, denoted by $\rho^\ast_y$. Denoting $D$ as the metric or divergence used {to measure data discrepancy} in the space of probability measures, we aim to find $\rho_u^\ast$ such that its push-forward matches $\rho^\ast_y$ as closely as possible. Similar to~\eqref{opt:pde}, $\mathcal{G}$ can be induced by a PDE, which leads to the following problem of \emph{DE-constrained optimization with stochasticity}:
\begin{equation}\label{eqn:argmin_sde}
\rho_u^\ast {\in} \text{argmin}\; D(\mathcal{G}_{\sharp}\rho_u, \rho^\ast_y)\,.
\end{equation}

As with the PDE-constrained optimization, whether the optimizer is unique and whether a simple gradient-based method can achieve the optimizer are unknown. The answers to these questions are rather problem-specific, and this goal is beyond the scope of a single paper. Instead, we strive to make the following contributions:
\begin{itemize}
    \item[1.]We will provide a recipe for a first-order gradient-based solver for~\eqref{eqn:argmin_sde}. Using different metrics to define the ``gradient'' for probability measures and different definitions of the distance/divergence function $D$, we can generate the corresponding gradient flows on this metric to move $\rho_u$ along.
    \item[2.] When $D$ is the Kullback--Leibler (KL) divergence, and the underlying metric for the probability space is the 2-Wasserstein distance, we will provide a particle method to simulate the gradient-flow equation. The updating formula for the particle is a combination of a forward and an adjoint solver pair.
    \item[3.] In the linear case, we study the well-posedness in both under-determined and over-determined scenarios and draw a relation to their counterparts in the deterministic setting.
\end{itemize}

Our formulation appears to be closely related to several well-established research topics, which we will discuss in Section~\ref{sec:compare}, including their connections and differences. This will be followed by a gradient flow formulation using different metrics for various choices of distance or divergence $D$ in~\eqref{eqn:argmin_sde}. In Section~\ref{sec:GF}, we also discuss the associated particle method, which solves the Wasserstein gradient flow for the KL distance. Section~\ref{sec:theory} presents the available theoretical results when the underlying map $\mathcal{G}$ is linear. We have discovered that the theoretical results correspond one-to-one to the over/under-determined linear system under the deterministic scenario. Finally, numerical evidence is presented in Section~\ref{sec:numerics} to support our findings, followed by the conclusion in~\Cref{sec:conclude}.

\section{Comparisons with Other Subjects}\label{sec:compare}
Several other research fields in applied mathematics are closely related to the problem we propose to study in the Introduction. Bayesian inverse problem~\cite{Stuart}, for instance, is a field that uses probabilistic models to infer unknown parameters of a system from observed data. 
Another related field is density estimation~\cite{silverman1986density}  that focuses on estimating the probability density function of a random variable from a set of observations. In the following subsections, we will conduct a comprehensive review of these related fields, discuss how they are related to our problem, and point out some key differences between them and our proposed approach.

\subsection{Bayesian Inverse Problem}
We now draw the connection to the Bayesian inverse problem \cite{marzouk2007stochastic,Stuart} and the associated sampling problem~\cite{garbuno2020interacting,chen2023gradient,summary_da,kim2023forwardbackward}. Again starting from \eqref{eqn:G}, it considers the scenario where the measurement data is corrupted by noise. Most commonly, the observed data $y$ is assumed to be the sum of the true data and a random noise term
\begin{equation*} 
y = \mathcal{G} (u) + \eta\,,\quad u \in \mathcal{A}\subset\mathbb{R}^m\,,
\end{equation*}
where the additive noise is assumed to be Gaussian, i.e., $\eta \sim \mathcal{N}(0,\Gamma)$. Additionally, a prior distribution $\rho^{\text{prior}}(u)$ for the parameter $u$ is given, often assumed to be Gaussian as well. Then the goal is to find the most efficient way to sample from the posterior given by Bayes' theorem:
\begin{align} \label{Bay1}
  \rho^{\text{post}} (u) \propto \mathbb{P}(y|u ) \rho^{\text{prior}}(u )\,, 
\end{align}
where $\mathbb{P}(y|u )$ is the likelihood function, whose concrete form is determined by the noise assumption for the data $y$. Since $\eta \sim \mathcal{N}(0,\Gamma)$, the likelihood function is given by
\[
\mathbb{P}(y|u ) \propto e^{-\half |y - \calG(u)|_\Gamma^2 }\,.
\]

Although the ultimate objective in both the Bayesian inverse problem and our framework~\eqref{eqn:argmin_sde} is to identify the distribution of the parameter $u$, the sources of randomness in our formulation are \textit{fundamentally different} from those in the Bayesian framework. Specifically, in the Bayesian framework, uncertainty arises due to noise in the measurement process of obtaining $y$ while the true data $\calG(u)$ is assumed to be deterministic. This means if {the measurement is entirely precise and $\calG$ is invertible}, the posterior distribution would be a delta measure, meaning that $u$ would be deterministic.

In contrast, in our case, the true data is a probability distribution by itself. Under a deterministic forward map $\calG$, the randomness of the parameter is an inherent feature of the system being modeled, and even the precise ``reading'' $y$ would nevertheless lead to a data distribution.

With the prior $\rho_0 =  \mathcal{N}(m_0, \Sigma_0)$ and the noise assumption on the data, the posterior distribution is of the form $\rho(u):=\frac{1}{Z} e^{-V(u)}$ with
$V(u) = \half |y - \calG(u)|_\Gamma^2 + \half |u - m_0|_{\Sigma_0}^2$. One way to sample from the posterior~\eqref{Bay1} is to use the Langevin dynamics~\cite{robert1999monte}, where  a set of particles $\{u^{(j)}\}$ are evolved  by
\begin{align*} 
\rd u^{(j)} (t) = -\nabla V(u^{(j)}) \rd t + \sqrt{2} \rd W(t)\,, \quad u^{(j)}(0) \sim \rho^{\text{prior}}(u)\,.
\end{align*}
In the infinite time horizon, i.e., when $t \rightarrow \infty$, $\{u^{(j)} (t)\}$ will be samples from the posterior $\rho^{\text{post}}(u)$. 

The Bayesian inverse problem also bares a variational formulation. The posterior $\rho(u)$ can be characterized as a distribution that minimizes the Kullback--Leibler (KL) divergence~\cite{zellner1988optimal}, i.e., 
\begin{align} \label{vaB}
\rho^{\text{post}}(u) \in \argmin_{\pi(u)} KL\left(\pi(u)| \mathbb{P}(y|u ) \rho^{\text{prior}}(u ) \right) \,.
\end{align}
Therefore, $\rho^{\text{post}}(u)$ can be obtained by evolving the Wasserstein gradient flow of the divergence in~\eqref{vaB} to equilibrium. The Wasserstein metric and its corresponding energy function, such as the KL divergence, can take different forms in the formulation \cite{garcia2020bayesian}. It is apparent from both Bayesian formulation~\eqref{vaB} and our formulation \eqref{eqn:argmin_sde} that, although both attempt to match the target distribution, the target distribution and source distribution in each approach differs.

\subsection{Density Estimation}
Another related field to our formulation is density estimation. Density estimation is a statistical technique used to estimate the probability density function of a random variable from observed samples~\cite{silverman1986density,sheather2004density,wang2022minimax}. It also finds great use in artificial intelligence such as generative modeling~\cite{albergo2023stochastic,song2019generative,lee2023convergence,chen2023sampling}. One common approach to density estimation is to use kernel density estimation~\cite{chen2017tutorial}, which involves convolving a set of basis functions (typically kernels) with the observed samples to estimate the underlying density function. The choice of kernel function and bandwidth parameter can affect the accuracy and smoothness of the resulting estimate. Other methods for density estimation include histogram-based methods, parametric models (e.g., using a normal distribution), flow-matching type methods, and non-parametric models (e.g., using a mixture of distributions). 

Density estimation is mostly studied for a parameterized distribution where the goal is to estimate the parameters of a specific distribution rather than the entire density function. This is commonly done using maximum likelihood estimation, a variational approach~\cite{welling2011bayesian}. Given a set of observations $y_1, y_2, ..., y_N$, and a true but unknown probability density function $p(y)$, we seek to estimate the parameter $\theta$ in the density function $q(y;\theta)$ that minimizes the KL divergence between $q(y;\theta)$ and $p(y)$. That is,
\begin{align*}
    \theta^* &= \argmin_{\theta} \text{KL} \left( p(y) | q(y;\theta) \right) = \argmin_{\theta} \mathbb{E}_{p(y)} \left[\log p(y) - \log q(y;\theta) \right] \nonumber \\
    &=   \argmax_{\theta} \mathbb{E}_{p(y)} \left[ \log q(y) \right] \approx \argmax_{\theta} \frac{1}{N}\sum_{i=1}^N \log q(y_i;\theta)\,, 
\end{align*}
where the last two terms correspond to the so-called maximum log-likelihood estimation. 

In our framework~\eqref{eqn:argmin_sde}, when the push-forward map $\mathcal{G} = I$, the identity map, then the problem reduces to a classical density estimation problem. Otherwise, we are performing a density estimation for $u$ not from samples of $u$, but samples of $\rho_y = \mathcal{G}\sharp \rho_u$ where $u\sim \rho_u$. The problem in~\eqref{eqn:argmin_sde} combines both the density estimation aspect (from samples of $y$ to the density of $y$) and the inversion part (from the density of $y$ to the density of $u$).

\section{Gradient Flow Formulation and Particle Methods}\label{sec:GF}
Denote $\mathcal{P}_2({\mathcal{A}})$ the collection of all probability measures supported on $\mathcal{A}$, the admissible set, with finite second-order moments. We endow $\mathcal{P}_2({\mathcal{A}})$ with a metric $d_g$. Being confined to the set $\mathcal{P}_2(\mathcal{A})$ equipped with a metric $d_g$, the variational formulation~\eqref{eqn:argmin_sde} for a DE-constrained optimization problem becomes
\begin{align}\label{eqn:argmin_sde3}
    \rho_u^\ast= \argmin_{\rho_u\in \left( \mathcal{P}_2(\mathcal{A}), d_g\right)} E(\rho_u)  := D(\rho_y\,, \rho^\ast_y)=D(\calG_{\sharp}\rho_u, \rho^\ast_y)\,.
\end{align}

This is to find the optimal distribution $\rho^\ast_u\in\mathcal{P}_2(\mathcal{A})$, which, upon being pushed forward by $\mathcal{G}$, is the closest to the probability distribution of the data: $\rho^\ast_y$, measured by {the data discrepancy} $D$. The map $\calG$ is then given by the deterministic forward operator that maps a fixed parameter configuration to the measurement.

Similar to the fact that the gradient flow~\eqref{eqn:GD} is used to solve the deterministic optimization problem~\eqref{eqn:argmin_pde}, one can run gradient-descent type algorithms on the space of probability measures to solve a variational problem defined over the probability space such as~\eqref{eqn:argmin_sde3}.
Since such an optimization problem is over an infinite-dimensional space, gradient-based algorithms are particularly attractive in terms of computational cost.  Using gradient-based algorithms for~\eqref{eqn:argmin_sde3} amounts to updating $\rho_u$ based on the gradient direction of $E(\rho_u)$. The precise definition of the ``gradient'' here relies on the metric $d_g$ that the underlying probability space $\mathcal{P}_2(\mathcal{A})$ is equipped with. 

We want to address that different pairs of $(d_g, E)$ yield different gradient flow formulations to update $\rho_u$. We will focus on a few concrete examples in the following subsections.

\subsection{The Wasserstein Gradient Flow Strategy}
First, we consider $d_g$ as the quadratic Wasserstein metric ($W_2$). We first define $W_2$ using the Kantorovich formulation of the optimal transportation problem.
\begin{definition}[Kantorovich formulation]
Let $(M,d)$ be a metric space. The quadratic Wasserstein metric between two probability measures $\mu$ and $\nu$ defined on $M$ with finite second-order moments is
\[
W_2(\mu, \nu) = \left( \inf_{\gamma \in \Gamma(\mu, \nu)} \int_{M\times M} \rd(x, y)^2 \rd \gamma(x,y) \right)^{1/2}\,,
\]
where $\Gamma(\mu, \nu)$ is the set of all coupling for $\mu$ and $\nu$. A coupling $\gamma$ is a joint probability measure on $M \times M$ whose marginal distributions are $\mu$ and $\nu$, respectively. That is,
\begin{equation*}
\int_M \gamma(x, y) \,\rd y = \mu(x)\,,\quad \int_M \gamma(x, y) \,\rd x = \nu(y)\,.
\end{equation*}
\end{definition}
Throughout our paper, we set $\mathcal{A}\subset M = \mathbb{R}^m$ with $d$ being the Euclidean distance. {Equipped with the 2-Wasserstein metric, the Wasserstein gradient flow equation for $E(\rho_u)$ writes~\cite{ambrosio2005gradient,santambrogio2015optimal}:}
\begin{equation} \label{eqn:GF}
\partial_t\rho_u =- \nabla_{W_2} E(\rho_u) =  \nabla_u\cdot\left(\rho_u\nabla_u \frac{\delta E}{\delta \rho_u}\right)\,.
\end{equation}

This gradient flow is guaranteed to descend the energy. To see this, we multiply $\frac{\delta E}{\delta\rho_u}$ on both sides:
\begin{align*}
    \frac{\rd}{\rd t} E(\rho_u) =  \int \partial_t \rho_u \frac{\delta E}{\delta \rho_u} \rd u =  \int \nabla_u\cdot\rho_u\nabla_u\left(\frac{\delta E}{\delta \rho_u}\right)\frac{\delta E}{\delta \rho_u} \rd u = - \int \rho_u(u) \bigg|\nabla_u \frac{\delta E}{\delta \rho_u}\bigg|^2\rd u \leq 0\,.
\end{align*}
Immediately, we see from the fact that the right-hand side is negative, $E(\rho_u)$ decays in time, which implies that the equilibrium $\rho_u^\infty$ should satisfy 
\begin{equation} \label{0312}
\nabla_u \frac{\delta E}{\delta \rho_u^\infty } = 0 \quad \text{on the support of}~ \rho_u^\infty\,.
\end{equation}

For the given specific form of $E(\rho_u)$ in~\eqref{eqn:argmin_sde3}, we find an explicit formulation for $\frac{\delta E}{\delta \rho_u}$. This can be done through the standard technique from the calculus of variation. According to the definition of the Fr\'echet derivative, we perturb $\rho_u$ by $\delta \rho_u$ where $\int  \delta \rho_u \rd u = 0$. Then we have
\begin{align} \label{0301}
\lim_{\norm{\delta \rho_u}_2 \rightarrow 0} [E(\rho_u + \delta \rho_u) - E(\rho_u) ] 
= \int \frac{\delta E}{\delta {\rho_u}} \delta \rho_u \rd u \,.
\end{align}
Since $y=\calG(u)$, it follows that
\[
 \rho_y=\calG_{\sharp}\rho_u\,,\quad\text{and}\quad \delta\rho_y=\calG _{\sharp}\delta \rho_u\,.
\]
Substituting the definition $E(\rho_u) = D(\calG _{\sharp} \rho_u, \rho^*_y)$, we find
\begin{align} \label{0302}
    & E(\rho_u + \delta \rho_u) - E(\rho_u) \nonumber 
    \\ =& D(\rho_y + \delta \rho_y, \rho_y^*) - D(\rho_y, \rho_y^* ) \nonumber 
    \\ = & \int \frac{\delta D}{\delta \rho_y} (y) \delta \rho_y (y) \rd y + \text{higher order terms} \nonumber 
    \\ = &\int \frac{\delta D}{\delta \rho_y} (\calG(u)) \delta \rho_u(u) \rd u + \text{higher order terms}, 
\end{align}
where the last equality uses the definition of a push-forward map. That is, if $f = T_{\sharp}g$ by $y=T(x)$, then for any measurable function $F$ and $\Omega$ in the support of $f$, 
$$\int_{x \in T^{-1}(\Omega)}  F(T(x)) g(x)\rd x  = \int_{y \in \Omega} F(y) f(y) \rd y.$$
Comparing \eqref{0301} with \eqref{0302}, we have:
\begin{align}\label{eqn:Frechet}
    \frac{\delta E}{\delta \rho_u}(u) =   \frac{\delta D}{\delta \rho_y} \circ \mathcal{G} (u)\,.
\end{align}

This means the Fr\'echet derivative of $E$ on $\rho_u$ is that of $D$ on $\rho_y$ composing with the push-forward map $\mathcal{G}$. As a concrete example, we consider $D$ in \eqref{eqn:argmin_sde3} to be the KL divergence, namely:
\begin{align}\label{eqn:KL_obj}
    D(\rho_y, \rho_y^*) = \int \rho_y \log \frac{\rho_y}{\rho_y^*} \rd y\,,\quad\text{and}\quad \frac{\delta D}{\delta \rho_y} = \log \rho_y - \log \rho_y^* + 1 \,.
\end{align}
Combining this with~\eqref{eqn:GF} and~\eqref{eqn:Frechet}, we finalize the evolution equation:
\begin{equation}\label{eqn:GF_KL}
\partial_t\rho_u  = \nabla_u\cdot\left(\rho_u\nabla_u\left( \log \frac{\rho_y}{ \rho^\ast_y}(\calG(u)) \right)\right)\,.
\end{equation}
Throughout the paper, it is the convention to use the quadratic Wasserstein metric (i.e., $d_g = W_2$) to equip $\mathcal{P}_2(\mathcal{A})$ if not specifically mentioned otherwise. Similarly, $D=\text{KL}$ is used as the convention to define the cost functional $E(\rho_u)$.

\subsection{Other Possible Metrics}
The probability space can be metricized in various ways, and there exist several data misfit functions $D$ to serve as the data  discrepancy, used in place of the KL divergence. In this subsection, we provide a few examples to demonstrate the breadth and versatility of the proposed framework.

\subsubsection*{Wasserstein Gradient Flow of the  Wasserstein-Based Objective Function}
One choice is to set $D$ based on the Wasserstein metric with the cost function $\mathsf c$ while setting $d_g = W_2$. More precisely, let
\begin{equation}\label{eqn:discrepancy_w2}
  D(\rho_y, \rho_y^*) := \inf_{\gamma \in \Gamma(\rho_y, \rho_y^*)} \int_ {M \times M} \mathsf c(x,y) \rd \gamma \,,
\end{equation}
where $\Gamma(\rho_y, \rho_y^\ast)$ is the set of all coupling of $\rho_y$ and $\rho_y^*$, and $\mathsf c $ is a cost function, e.g., $\mathsf c (x,y) = |x-y|^2$. Then by the Kantorovich duality~\cite{villani2021topics}, it has the following equivalent form
\begin{align} \label{eq:W2_dual}
   D(\rho_y, \rho_y^*) = \sup_{(\phi, \psi) \in \Phi(\rho_y, \rho_y^*)} \int_M \phi(x) \rho_y(x) \rd x + \int_M \psi(y) \rho_y^*(y) \rd y\,,
\end{align}
where $\Phi(\rho_y, \rho_y^*)$ is the set of pairs $(\phi, \psi)$ such that $\phi(x) + \psi(y) \leq \mathsf c(x,y)$ for all $(x,y) \in M \times M$. If we denote $(\phi^*, \psi^*)$ the maximizing pairs, also referred to as the Kantorovich potentials, then we have~\cite{villani2021topics}
\begin{align} \label{eq:W2_dual_Frechet}
    \frac{\delta D}{\delta \rho_y} = \phi^*(x)\,.
\end{align} 
Following~\eqref{eqn:GF} and~\eqref{eqn:Frechet}, the corresponding Wasserstein gradient flow is
\begin{align} \label{eq:GF_W2}
    \partial_t \rho_u = \nabla_u \cdot \left( \rho_u \nabla_u \phi^*\left(\calG (u) \right) \right)\, .
\end{align}

\subsubsection*{Hellinger Gradient Flow of the $\chi^2$ Divergence} 
While the Wasserstein metric is typically used for its simplicity in bridging particle systems and the underlying flows, one can also equip $\mathcal{P}_2(\mathcal{A})$ with other metrics (i.e., $d_g$ in~\eqref{eqn:argmin_sde3}) over the probability space. Another example is the Hellinger distance defined below.
\begin{definition}[The Hellinger distance]
Consider two probability measures $P$ and $Q$ both defined on a measure space $M$ that are absolutely continuous with respect to an auxiliary measure $\mu$, i.e., 
\begin{align*}
    P(\rd x) = p(x) \mu(\rd x), \quad 
    Q(\rd x) = q(x) \mu(\rd x)\,.
\end{align*}
The Hellinger distance between $P$ and $Q$ is
$
    H^2(P,Q) = \half \int_M \left( \sqrt{p(x)} - \sqrt{q(x)} \right)^2 \mu(\rd x)\,.
$
\end{definition}

Following \cite{lindsey2022ensemble}, we consider the gradient flow when $D$ is the chi-squared ($\chi^2$) divergence and $E(\rho_u)$ is determined correspondingly. More specifically, we have
\begin{align} \label{chi2}
    D(\rho_y, \rho_y^*) = \chi^2 (\calG_{\sharp} \rho_u, \rho_y^* ) = \int \frac{(\calG_{\sharp} \rho_u)^2 }{\rho_y^*} \rd y - 1 \,.
\end{align}
Then the gradient flow of \eqref{chi2} with respect to the Hellinger distance can be derived via the so-called JKO scheme~\cite{jordan1998variational}.
That is,
\begin{align} \label{JKO0}
    \partial_t \rho_u = \lim_{\eps \rightarrow 0} \frac{\rho_u^\eps - \rho_u}{\eps} \,,
\end{align}
where 
$
    \rho_u^\eps \in \argmin_{\int \tilde \rho_u \rd u =1} \left\{ D(\calG_{\sharp} \tilde \rho_u, \rho_y^*) + \frac{1}{2\eps} H^2(\tilde \rho_u, \rho_u)\right\} \,.
$
Then the optimality condition yields the following relation
\begin{align} \label{JKO2}
    \rho_u^\eps= \rho_u - 4 \eps \rho_u \left[ \frac{2\rho_y}{\rho_y^*}(\calG(u)) -\lambda \right] \,,
\end{align}
where $\lambda$ is the Lagrangian multiplier to make sure that $\rho_u^\eps$ integrates to one. Hence, 
$\lambda = \int  \frac{2\rho_y}{\rho_y^*} (\calG (u)) \rho_u \rd u$.
Plugging \eqref{JKO2} into \eqref{JKO0}, we get
\begin{align*}
    \partial_t \rho_u = 8 \rho_u \left[ \int  \frac{\rho_y}{\rho_y^*} (\calG (u)) \rho_u \rd u -  \frac{\rho_y}{\rho_y^*} (\calG (u))  \right]\,.
\end{align*}

\subsubsection*{Kernelized Wasserstein Gradient Flow}
The so-called Stein Variational Gradient Descent (SVGD) can be seen as the kernelized Wasserstein gradient flow of the KL divergence~\cite{liu2017stein}. Through an equivalent reformulation of the same dynamics, SVGD can also be regarded as the kernelized Wasserstein
gradient flow of the $\chi^2$
divergence but with a different kernel~\cite{chewi2020svgd}. In~\cite{duncan2019geometry}, SVGD is formulated as the true gradient flow of the KL divergence under the newly defined Stein geometry. That is, $d_g$ is a metric based on the Stein geometry, while $E$ is decided by setting $D$ as the KL divergence in~\eqref{eqn:argmin_sde3}.

\subsection{Particle Method}
One significant advantage of using the gradient flow formulation~\eqref{eqn:GF} with the underlying metric being $W_2$ is the ease of translating the produced PDE formulation, such as~\eqref{eqn:GF} and~\eqref{eqn:GF_KL}, to a particle formulation. This feature allows an easy implementation of numerical schemes. According to~\eqref{eqn:GF_KL}, each particle i.i.d. drawn from $\rho_u$ should evolve by descending along its negative velocity field:
\[
\frac{\rd}{\rd t}u(t)  = -\nabla_u\left.\frac{\delta E}{\delta\rho_u}\right|_{\rho_u(t)}(u(t)) = -\nabla_u\mathcal{G}^\top |_{u(t)} \xi(y(u(t)))\,,
\]
where we used the definition of
\begin{equation}\label{eqn:def_xi}
\xi(y(u(t))):=\nabla_y\left.\frac{\delta D}{\delta\rho_y}\right|_{\mathcal{G}\sharp\rho_u(t)}(y(u(t)))\,.
\end{equation}
When $\mathrm{KL}$ divergence is used, the expression for $\xi$ can be made more explicit, and thus:
\begin{equation}\label{eqn:GF_u}
\frac{\rd}{\rd t}u  = -\nabla_u\left( \log \frac{\rho_y}{ \rho^\ast_y}(\calG(u))\right)= -\left.\nabla_u\calG^\top \right|_{u(t)}\nabla_y\log\left(\rho_y/\rho_y^\ast\right)(y(t))\,,\,\,\text{where} \,\, y(t) = \mathcal{G}(u(t))\,,
\end{equation}
where $\nabla_u \calG\big|_{u_j(t)} \in \mathbb{R}^{n\times m}$ is a Jacobian matrix of $\mathcal{G}$. Multiplying the Jacobian $\frac{\rd y}{\rd u}=\nabla_u\calG$ on both sides, we have
\begin{equation}\label{eqn:GF_y}
\frac{\rd}{\rd t}y =  - \left.\nabla_u\calG\right|_{u(t)} \left.\nabla_u\calG\right|_{u(t)}^\top \nabla_y\log\left(\rho_y/\rho_y^\ast\right)\,,\quad\text{where}\quad y\sim\rho_y\,.
\end{equation}

Note that in practice, these two formulas cannot be executed because $\rho_u$ and $\rho_y$ are unknown. Therefore, one needs to replace it with its numerical approximation. To be more precise, let $u_i$ be a list of particles drawn from $\rho_u$, then we write $\rho^N_u$ as an empirical distribution approximating $\rho_u$, i.e.,
\begin{equation}\label{eqn:ensemble_u}
\rho_u\approx  \rho^N_u=\frac{1}{N}\sum_{j=1}^N\delta_{u_j}\,,\quad\text{and thus}\quad \rho^N_y = \calG_{\sharp}\rho^N_u = \frac{1}{N}\sum_{j=1}^N\calG_{\sharp}\delta_{u_j}=\frac{1}{N}\sum_{j=1}^N\delta_{y_j}\,.
\end{equation}
In this ensemble version, all the particles $u_j$ evolve according to:
\begin{equation}\label{eqn:particle_method}
\begin{aligned}
\partial_t u_j = - \nabla_u \left.\frac{\delta E}{ \delta \rho_u}\right|_{\rho^N_u} (u_j)&=  - \nabla_u \left[\log \rho^N_y(\calG(u_j)) - \log \rho^\ast_y(\calG(u_j)) \right]\\
&=
\frac{\nabla_u \rho^{\ast}_y(\calG(u_j)}{\rho^{\ast}_y(\calG(u_j)}  - \frac{\nabla_u \rho^N_y(\calG(u_j)}{\rho^N_y(\calG(u_j)} \\
& = \nabla_u \calG^\top \big|_{u_j}  \underbrace{\left( 
\frac{\nabla_y \rho^{\ast}_y(y_j)}{\rho^{\ast}_y(y_j)} - \frac{\nabla_y \rho^N_y(y_j)}{\rho^N_y(y_j)} \right)}_{\xi_j=\nabla_y\left.\frac{\delta D}{\delta\rho_y}\right|_{\rho^N_y}(y_j)}\,,
\end{aligned}
\end{equation}
where $y_j = \calG(u_j) \in \mathbb{R}^n$ and they both evolve in time.

It is easy to see that when $\rho_y$ is the ensemble distribution~$\rho^N_y$ defined in~\eqref{eqn:ensemble_u}, $\xi_j$ is not well-defined.
In particular, the singularity induced by the Dirac deltas can be numerically inaccessible. In simulations, one has to approximate $\rho_y^N$ using probability density functions to implement~\eqref{eqn:particle_method}. One option is to use kernel density estimation with an isotropic Gaussian kernel~\cite{chen2017tutorial,calvello2022ensemble}. That is,
\[
\rho^N_y\approx\frac{1}{N}\sum_{j=1}^N\phi^\epsilon(y-y_j)\,,\quad \phi^\epsilon(y)=\frac{1}{(2\pi\epsilon)^{n/2}}\exp\left(-\frac{y^2}{2\epsilon}\right)\,.
\]
This formulation leads to:
\[
\frac{\nabla_y \rho^N_y}{\rho^N_y}(y)=-\frac{1}{\epsilon}\frac{\sum_{j} (y-y_j)e^{-(y-y_j)^2/2\epsilon}}{\sum_{j} e^{-(y-y_j)^2/2\epsilon}}\,,\quad \forall y\,,
\]
and when plugged into~\eqref{eqn:particle_method}, it gives the final particle method:
\begin{equation}\label{eqn:particle_KL}
\partial_t u_j ={ \left( \nabla_u \calG\big|_{u_j} \right)^\top \xi_j}\,,\quad\text{with}\quad \xi_j =  
\frac{\nabla_y \rho^{\ast}_y(y_j)}{\rho^{\ast}_y(y_j)} +\frac{1}{\epsilon}\frac{\sum_{i} (y_j-y_i)e^{-(y_j-y_i)^2/2\epsilon}}{\sum_{i} e^{-(y_j-y_i)^2/2\epsilon}}\,.
\end{equation}
If the observed data $\rho^{\ast}_y$ is also an empirical distribution, we can apply kernel density estimation to obtain an approximated reference density.

\subsection{Adjoint Solver Simplification}\label{subsec:adjoint}
When $\calG$ is explicitly given, $\nabla_u\calG$ in~\eqref{eqn:particle_KL} is rather immediate. However, in many situations, $\calG$ is generated by the underlying differential equations, and the explicit calculation of $\calG$ relies on PDE solvers. Computing the associated gradient would be even more complicated. In particular, recall $\calG$ maps $\mathbb{R}^m$ to $\mathbb{R}^n$. The gradient is stored in a Jacobian matrix of size $n \times m$. As such, the preparation of the entire matrix directly calls for $mn$ partial derivatives computations, each of which, in turn, calls for a PDE solver. The associated computational cost is prohibitive.

To simplify the computation, we note that in~\eqref{eqn:particle_KL}, the Jacobian matrix is applied to a vector as the source term for updating $u_j(t)$. So instead of preparing for the whole Jacobian matrix and then multiplying it on a vector, one can directly compute the matrix-vector product on the equation level based on the adjoint approach~\cite[Sec.~3.3]{nurbekyan2022efficient}, as summarized below.

For a fixed parameter $u$, instead of dealing with the explicit map $y=\calG(u)$, we consider the following implicit relation between $u$ and $y$ that encodes the PDE information:
\begin{equation}\label{eq:constraint}
    g(y , u) = 0.
\end{equation}
{Here $g$ is the PDE operator that maps $u$ from parameter space and $y$ from PDE solution space to the right hand side of the PDE. We assume $g$ is Fr\'echet differentiable in both arguments.} Based on the first-order variation of~\eqref{eq:constraint} in both $u$ and $y$, we have
\begin{equation*} 
 \nabla_y g \, \nabla_u y  +\nabla_ug = 0\quad\Rightarrow\quad   \nabla_y g \,\nabla_u\calG \,   =  -\nabla_u g\,.
\end{equation*}
To compute $\nabla_u\calG^\top \xi$, we can set
\begin{equation} \label{eq:adj_eqn}
    \nabla_yg^\top \lambda =  \xi,
\end{equation}
which immediately translates to
\begin{equation}\label{eqn:assemble_gradient}
\nabla_u\calG^\top \xi=-\nabla_ug ^\top \lambda\,.
\end{equation}
We should note that in most PDE settings, $g$ maps the function space of $y$ and $u$ to a function space, and the notation of $\nabla_yg ^\top\lambda=\xi$ really means the inner product taken on the function space of $g$ and its dual, where $\lambda$ is chosen from. This typically translates to the adjoint PDE solver. Such adjoint solution then gets integrated in~\eqref{eqn:assemble_gradient} for the final functional gradient.

An illustrative example is to consider the acoustic wave equation on a spatial domain $\Omega$ and time interval $[0,T]$,
\begin{equation}\label{eq:wave_exp}
    u(x) \partial_{tt} y(x,t)- \Delta y(x,t) = s(x,t), \quad y(x,t=0) = \partial_t y(x,t=0) =0\,,
\end{equation}
where $y(x,t)$ is the wave equation solution, $s(x,t)$ is the given source term, and $u(x)$ is the squared slowness representing the medium property for the wave propagation. Without loss of generality, we consider $\Omega = \mathbb{R}^d$. For a fixed $s$, the solution $y$ is determined by $u$, but in general, we do not have an explicit formulation for the forward map $\calG$ such that $y = \calG(u)$. Instead, the implicit relation~\eqref{eq:wave_exp} is used. We can write~\eqref{eq:wave_exp} as
\[
g(u,y) = 0,\quad \text{where}\quad g(u,y) = u\, \partial_{tt} y - \Delta y -s\,.
\]
To compute the linear action of the adjoint Jacobian, $\xi\mapsto \nabla_u\calG ^\top \xi$, we need to solve the adjoint equation. To this end, we first easily obtain $\nabla_u g = \partial_{tt} y$. To compute $\nabla_yg^\top\lambda=\xi$, we realize that
\[
\nabla_yg^\top \lambda = \nabla_y\langle u\partial_{tt}y-\Delta y-s\,,\lambda\rangle_{x,t} = \nabla_y\langle y\,,\partial_{tt}(u\lambda)-\Delta \lambda\rangle_{x,t} = u \partial_{tt} \lambda-\Delta \lambda\,,
\]
and $\lambda$ satisfies zero final-time condition, i.e., $\lambda(x,t=T) = \partial_t \lambda(x,t=T) = 0$. {Here, $\langle \cdot, \cdot \rangle_{xt}$ represents integration in both $x$ and $t$ domains}, and we performed two levels of integration by parts leading to the equation of
\begin{equation}\label{eqn:adjoint}
u(x)\partial_{tt}\lambda -\Delta \lambda = \xi\,, \quad \text{with zero final condition}\,.
\end{equation}
Assembling the functional derivative according to~\eqref{eqn:assemble_gradient}, we have
\[
\nabla_u\calG^\top\xi=- \int_0^T\int_{\mathbb{R}^d} \partial_{tt} y  (x,t)\lambda(x,t) \rd x\rd t\,.
\]
Note that, as usual, $y$ solves the forward wave equation, and $\lambda$ solves the adjoint equation~\eqref{eqn:adjoint}. For this particular problem, the wave equation PDE operator is self-adjoint, making the forward and adjoint equations have the same form, except for the different source terms and the boundary conditions in time.

Returning to~\eqref{eqn:particle_KL}, to update the values for $u_j$, one can first compute $$\xi_j(t)=\nabla_y\left.\frac{\delta D}{\delta\rho_y}\right|_{\rho^N_y}(y_j(t))\,,$$
and solve the adjoint equation~\eqref{eq:adj_eqn} with respect to $\lambda$ before finally assembling the directional derivative in~\eqref{eqn:assemble_gradient}. The algorithm is summarized in Algorithm~\ref{alg:partical_method}.
\begin{algorithm}[h]
   \caption{Particle method for \eqref{eqn:GF_KL} with $g(u, y) = 0$ where $y = \calG(u)$ is an implicit forward map.}\label{alg:partical_method}
\begin{algorithmic}
\STATE \textbf{Input:} $\rho_y^*(y)$, initial guess $\{u_j^0\}_{j=1}^N$, and step size $\Delta t$.
\FOR{Iteration $n = 0, 1, 2, \cdots, N_{\max}$}
\STATE 1. Set $y_j^n = \calG(u_j^n)$, $j = 1, \cdots, N$
\STATE 2. Compute $\xi_j$ according to~\eqref{eqn:particle_KL}.
\STATE 3. Solve for $\lambda_j$ from the adjoint equation~\eqref{eq:adj_eqn} for every $\xi_j$.
\STATE 4. Update $u_j$ according to~\eqref{eqn:assemble_gradient}.
\ENDFOR
\STATE \textbf{Output:} Final particle locations $\{u^{N_{\max}}_j\}_{j=0}^N$.
\end{algorithmic}
\end{algorithm}

\subsection{Discussion on the Particle Method}
In this subsection, we will examine the similarities and differences between the particle method and other similar systems, highlighting the shared features as well as the distinguishing characteristics.

The first system we compare~\eqref{eqn:GF_u} with is the Langevin dynamics, the continuous version of the classical Langevin Monte Carlo (LMC) algorithm~\cite{dalalyan2017further}
developed to perform Bayesian sampling:
\[
\rd y_t = -\nabla_yf(y_t)\rd{t} +\rd W_t\,,
\]
where $W_t$ denotes the Wiener process. It is a convention to denote by $\pi(t,y)$ the associated probability distribution along time $t$, and apply the It\^{o} calculus to obtain the following Fokker--Planck equation:
\[
\partial_t\pi - \nabla_y\cdot (\nabla_yf\pi+\nabla_y\pi)=0\,,
\]
which can be viewed as the Wasserstein gradient flow of energy $E$
\begin{equation}\label{eqn:GF_LMC}
\partial_t\pi - \nabla_y\cdot\left(\pi\nabla_y\frac{\delta E}{\delta\pi}\right)=0\,,\quad\text{with}\quad E(\pi) = \mathrm{KL}(\pi|\pi^\ast)\,,
\end{equation}
where $\pi^\ast \propto e^{-f}$ is the target distribution. This shows that one interpretation of the Fokker--Planck PDE is that the evolution represents a first-order descending scheme that pushes $\pi$ to the target $\pi^\ast$ in the long time horizon when the distance/divergence and the metric $d_g$ are set to be $\mathrm{KL}$ and $W_2$, respectively. Algorithmically, this means that LMC is the first-order method to draw samples from a target distribution, justifying LMC's validity (asymptotic under some conditions).

While the original LMC requires samples from the Wiener process by adding Gaussian random variables in the updating formula, the corresponding gradient flow equation~\eqref{eqn:GF_LMC} admits a much more straightforward particle method. Namely,  we directly let the particles descent in the negative gradient direction:
\begin{equation}\label{eqn:sampling_GF}
\frac{\rd}{\rd t} y =-\nabla_y\log\left(\pi/\pi^\ast\right)\,,\quad\text{where}\quad y\sim\pi\,.
\end{equation}
As with~\eqref{eqn:GF_u}, this method is not immediately feasible due to the lack of explicit form of $\pi$, so in practice, we set $\pi\sim\pi^N=\frac{1}{N}\sum_i\delta_{y_i}$ as the ensemble distribution, and obtain the following updating formula for the particle method:
\[
\frac{\rd}{\rd t}y_i = -\left(\nabla\log\pi^N(y_i)-\nabla\log\pi^\ast(y_i)\right)\,.
\]
The same issue on the singularity of computing $\nabla\pi^N$ arises, and the blob method was constructed to mitigate such difficulties; see~\cite {carrillo2019blob} for a reference. 

We should note the strong similarity between the above formulation~\eqref{eqn:sampling_GF} and Equation~\eqref{eqn:GF_y}. The main difference between our approach and LMC sampling is that, in our framework, the sampler's motion is presented in the $u$ domain, and when translated to $y$-domain, must be projected onto the space spanned by the $\nabla_u\calG \,\nabla_u\calG^\top$, whereas LMC sampling operates directly on $y$ without projection. In some sense, the new formulation can be viewed as a projected gradient flow onto the tangent kernel space with the kernel spanned by columns of $\nabla_u\calG$.

On the other hand, the connection to LMC also inspires the possibility of introducing stochasticity, particularly the Brownian motion, to avoid dealing with Dirac delta functions. Indeed, denoting $C(u(t)) = \left.\nabla_u\calG\right|_{u(t)} \, \left.\nabla_u\calG\right|^\top_{u(t)}$, and assuming that $\calG$ is invertible, we set $B(y) = C(\calG^{-1}(y))$. Then the evolution equation for $\rho_y$ is explicit from~\eqref{eqn:GF_y} or \eqref{eqn:GF_KL}:
\begin{equation*}\label{eqn:FP_y}
\partial_t\rho_y = \nabla_y\cdot\left(\rho_y\,B(y)\,\nabla_y\log\left(\frac{\rho_y}{\rho_y^\ast}\right)\right) = \nabla_y\cdot\left(B\,\nabla_y\rho_y+\rho_y\,B\,\nabla_yf\right)\,,
\end{equation*}
where we assume $\rho_y^\ast\propto e^{-f(y)}$. As a consequence, we have the following stochastic particle method: 
\begin{equation}\label{eqn:LMC_y}
\rd y_t = -B(y(t))\, \nabla_yf(y_t)\rd{t} + \sqrt{2B(y(t))}\rd{W}_t\,.
\end{equation}
In the most simplified case, consider a linear dependence by letting $y = \calG(u) = \mathsf{A}u$. Then we have $\nabla_u\calG=\mathsf{A}$, and~\eqref{eqn:LMC_y} becomes:
\begin{equation*}
\rd y_t =  - \mathsf{A}\mathsf{A}^\top\,\nabla_yf\rd{t}+\sqrt{2\mathsf{A}\mathsf{A}^\top}\rd{W_t} \,,
\end{equation*}
a formulation that resembles Ensemble Kalman Sampler developed in~\cite{garbuno2020interacting}, where the matrix in front of $\nabla_y f$ is the data ensemble covariance matrix.

Finally, we draw the connection to the mirror descent method that the update formula \eqref{eqn:particle_KL} for $u$ carries~\cite{beck2003mirror, li2022mirror, wang2022hessian}. The mirror descent performs gradient descent on the mirror variable, defined by taking the gradient of a convex function. For example, one can define a convex function $\phi$ on $\mathbb{R}^m\ni x$ and the mirror variable $z(x) = \nabla_x\phi(x)$. The mirror descent in the continuous-in-time limit represents:
\[
\frac{\rd}{\rd t} z = - \nabla f(x)\,,\quad\text{or equivalently}\quad
\frac{\rd}{\rd t} x = -H_\phi^{-1}\nabla f(x)\,,
\]
where $H_\phi(x)$ is the Hessian term of $\phi$ evaluated at $x$. In our case shown in~\eqref{eqn:GF_y}, if we view $y$ as $x$ and $u$ as $z$, the descending formula in~\eqref{eqn:GF_y} writes:
\begin{equation*} 
    \frac{\rd}{\rd t} x = -\nabla_z \calG \big |_{z(t)} \, \nabla_z \calG^\top \big |_{z(t)}  \, \nabla_x f (x)\,.
\end{equation*}

If $\nabla_z \calG(z)$ is of full row rank for any fixed $z$, the matrix $\nabla_z \calG  \nabla_z \calG^\top$ is naturally strictly positive definite. If one can view it as the hessian of a convex function $\phi$, meaning $H_\phi^{-1}(z) = \nabla_z \calG \nabla_z \calG^\top$, 
then our update formula~\eqref{eqn:particle_KL} can be seen as a mirror descending procedure on $u$ using the mirror function $\phi$.

\section{Well-posedness Result for Linear Push-Forward Operators}\label{sec:theory}
Like deterministic PDE-constrained optimization problems, when the differential equation has parameters that have inherent randomness, the above formulated SDE-constrained optimization using the gradient-flow structure and the associated particle method for implementing the gradient flow are rather generic: the dimensions $m$ and $n$ in \eqref{eqn:G} can be arbitrary in the execution of the algorithm. However, the performance of the gradient descent algorithm and its capability of capturing the global minimum highly depends on the structure of the forward map $\mathcal{G}$. This makes providing a full-fledged convergence theory impossible.

Nevertheless, we can pinpoint certain properties when the push-forward map $\calG$ is linear. Resonating the situation in the deterministic setting, we carry out the studies by separating the discussion into over-determined and under-determined scenarios. In each scenario, we will begin our discussion with the deterministic setup \eqref{eqn:argmin_pde} with respect to the $L^2$ geometry and proceed with the stochastic setup~\eqref{eqn:argmin_sde}, emphasizing the similarity.

First, we fix the notation. Since $\mathcal{G}$ {is} linear, we denote it as
\[
y = \mathcal{G}(u)=\mathsf{A}u\,,\quad \text{with}\quad \mathsf{A}\in\mathbb{R}^{n\times m}\,
\]
throughout this section. 
We also assume that  $\mathsf{A}$ is full-rank, in the sense that $\text{rank}(\mathsf{A})=\min\{m,n\}=:r$, so there are no redundant rows/columns, and the size of the matrix determines if the system is under or over-determined. 
Additionally, we conduct the compact SVD for $\mathsf{A}$ and write
\begin{equation} \label{SVD}
 \mathsf{A}=\mathsf{V}\mathsf{S}\mathsf{U}^\top\,,\quad\text{with}\quad 
 \mathsf{V}\in\mathbb{R}^{n\times r}\,, \quad  \mathsf{S}\in\mathbb{R}^{r\times r}\,, \quad 
 \mathsf{U}\in\mathbb{R}^{m\times r}\,.
\end{equation}

To begin with, we realize that in this linear setting, the gradient flow equation~\eqref{eqn:GF} is much more explicit:
\begin{align}\label{eqn:GF_linear}
    \partial_t\rho_u - \nabla_u\cdot\left(\rho_u\nabla_u\left(\frac{\delta E}{\delta \rho_u}\right)\right)=0\,, \quad E(\rho_u) = D(\rho_y, \rho_y^*) =  D(\mathsf{A}_\sharp \rho_u, \rho_y^*)\,.
\end{align}

All discussions below extend this analysis to both under-determined and over-determined scenarios.  We use $\rho$ to denote probability density and measure interchangeably.

\subsection{Under-Determined Scenario}\label{sec:under_determine}
Under-determined scenario corresponds to the case when $n\leq m$. That is, the number of the to-be-determined parameters is no fewer than the collected data, and matrix $\mathsf{A}$ is short-wide. We incorporate the fully determined matrix ($n=m$) in this regime as a special case. When $n <m$, in either deterministic or stochastic settings, we expect infinitely many solutions to exist. 

\subsubsection{Deterministic Case}
The optimization~\eqref{eqn:argmin_pde} in the linear case becomes
\begin{equation}\label{eqn:under_determine_deter}
\min_u\|\mathsf{A}u -y^*\|^2\,.
\end{equation} 
When $\Amat$ is fully determined, the solution is unique, but when $n<m$, there are infinitely many possibilities to choose $u$ so that the $y^*=\mathsf{A}u $ exactly. In practice, to select a unique parameter, one typical approach is to add a regularization term. This way, the selected parameter configuration not only minimizes~\eqref{eqn:under_determine_deter} but also satisfies certain properties known a priori. One of the most classic examples is Tikhonov regularization: $\min_u\|\mathsf{A}u - y^*\|^2 +\|u-u_0\|^2$. This ensures that the error term $y-\mathsf{A}u$ is small and that the optimizer is relatively close to the suspected ground truth $u_0$. Here we will take a different point of view. Instead of adding a regularization, we run gradient descent on the vanilla objective~\eqref{eqn:under_determine_deter}. As expected, the choice of the initial guess $u_\text{i}$
plays the role of selecting a unique minimizer. In other words, the optimization method itself, which, in our case, is the gradient descent algorithm, has an implicit regularization effect on the converging solution.

To this end,  we first augment $\mathsf{A}$ with $\tilde{\mathsf{A}}$ to form a rank-$m$ matrix, and correspondingly define the augmented $y^\ex$:
\begin{equation}\label{eqn:def_A_ex}
\mathsf{A}^\text{ex}=\left[\begin{array}{c}{\mathsf{A}} \\ \tilde{\mathsf{A}}\end{array}\right]\,,\quad y^\ex = \mathsf{A}^\ex u =\left[\begin{array}{c}{\mathsf{A}} u \\ \tilde{\mathsf{A}} u\end{array}\right]=\left[\begin{array}{c}y \\ \tilde{y}\end{array}\right]\,.
\end{equation}
{Note the definition of $\tilde{\mathsf{A}}$ is not unique but it does not affect the upcoming calculation. The easiest choice is to set $\tilde{\mathsf{A}}^\top=\mathsf{U}^\perp$, where $\mathsf{U}^\perp$ is the orthogonal complement of $\mathsf{U}$ given in~\eqref{SVD}.} As a result, the right singular vector set for $\tilde{\mathsf{A}}$ is $\mathsf{U}^\perp$. 
 Suppose $u^\ast \in \{u : \mathsf{A} u = y^*\}$. Then the solution set can be written as
\begin{equation}\label{eqn:equi_set_determ}
\mathcal{S}=\{u^\ast+\tilde{u}:\quad\mathsf{A}\tilde{u}=0\}=\{u^\ast+\text{span}\mathsf{U}^\perp\}\,.
\end{equation}

The particular solution chosen from the solution set $\mathcal{S}$ is uniquely determined by the optimization process and the initial data. Suppose we perform gradient descent on~\eqref{eqn:under_determine_deter}. Then in the continuous-time limit, it amounts to solving the following ODE:
\begin{equation}\label{eqn:theta_ODE_determ}
\frac{\rd u }{\rd t}= - \mathsf{A}^\top\left(\mathsf{A}u-y^*\right)\,,\quad\text{with}\quad u(t=0)=u_\text{i}\,.
\end{equation}
The following result shows that the gradient descent leads us to the solution that agrees with $u_0$ when projected onto $\mathsf{U}^\perp$ and agrees with $u^\ast$ when projected onto $\mathsf{U}$.
\begin{proposition}\label{prop:under_determine}
The {equilibrium} solution to~\eqref{eqn:theta_ODE_determ}, denoted by $u_\text{f}$, given the initial iterate $u_\text{i}$, can be written as
\[
u_\text{f}=\mathsf{U}\mathsf{U}^\top u^\ast+ \mathsf{U}^\perp(\mathsf{U}^\perp)^\top u_\text{i}\,.
\]
Moreover, $y^\ex_\text{f} = \mathsf{A}^\ex u_\text{f}$ as defined in~\eqref{eqn:def_A_ex} satisfies:
\begin{equation}\label{eqn:y_proj_under}
y_\text{f} = y^\ast\,,\quad\text{and}\quad \tilde{y}_\text{f} = \tilde{\mathsf{A}} u_\text{i}\,.
\end{equation}
\end{proposition}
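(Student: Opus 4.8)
The plan is to exploit the compact SVD $\mathsf{A} = \mathsf{V}\mathsf{S}\mathsf{U}^\top$ to diagonalize the linear ODE~\eqref{eqn:theta_ODE_determ} and decouple it into a component lying in the row space of $\mathsf{A}$ (spanned by the columns of $\mathsf{U}$) and a component lying in its null space (spanned by $\mathsf{U}^\perp$). Since $\mathsf{V}^\top\mathsf{V} = \mathsf{I}_r$ and $r=n$ in the under-determined regime, we have $\mathsf{A}^\top\mathsf{A} = \mathsf{U}\mathsf{S}^2\mathsf{U}^\top$ and $\mathsf{A}^\top y^\ast = \mathsf{U}\mathsf{S}\mathsf{V}^\top y^\ast$, so the right-hand side of~\eqref{eqn:theta_ODE_determ} only involves the columns of $\mathsf{U}$.

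The key step is to introduce the coordinates $p = \mathsf{U}^\top u \in \RR^{r}$ and $q = (\mathsf{U}^\perp)^\top u \in \RR^{m-r}$, which together reconstruct $u$ because $[\mathsf{U}, \mathsf{U}^\perp]$ is orthonormal, i.e., $\mathsf{U}\mathsf{U}^\top + \mathsf{U}^\perp(\mathsf{U}^\perp)^\top = \mathsf{I}_m$. Left-multiplying~\eqref{eqn:theta_ODE_determ} by $\mathsf{U}^\top$ and by $(\mathsf{U}^\perp)^\top$, and using the orthogonality relations $\mathsf{U}^\top\mathsf{U} = \mathsf{I}_r$ and $(\mathsf{U}^\perp)^\top\mathsf{U} = 0$, yields the decoupled system $\dot p = -\mathsf{S}^2 p + \mathsf{S}\mathsf{V}^\top y^\ast$ and $\dot q = 0$. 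The null-space coordinate is thus frozen, $q(t) \equiv (\mathsf{U}^\perp)^\top u_\text{i}$, while the row-space coordinate obeys a stable linear ODE; since $\mathsf{S}^2 \succ 0$, it converges to its unique equilibrium $p_\infty = \mathsf{S}^{-1}\mathsf{V}^\top y^\ast$ as $t\to\infty$. Because any $u^\ast$ with $\mathsf{A}u^\ast = y^\ast$ satisfies $\mathsf{S}\mathsf{U}^\top u^\ast = \mathsf{V}^\top y^\ast$ (apply $\mathsf{V}^\top$ to $\mathsf{A}u^\ast = y^\ast$ and use that $\mathsf{V}$ is square orthogonal here), one identifies $p_\infty = \mathsf{U}^\top u^\ast$; note this value is independent of which $u^\ast$ in the solution set~\eqref{eqn:equi_set_determ} one picks, as they differ only by elements of $\text{span}\,\mathsf{U}^\perp$, which is annihilated by $\mathsf{U}^\top$.

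Reassembling $u_\text{f} = \mathsf{U}p_\infty + \mathsf{U}^\perp q_\infty = \mathsf{U}\mathsf{U}^\top u^\ast + \mathsf{U}^\perp(\mathsf{U}^\perp)^\top u_\text{i}$ gives the stated formula. For the two consequences, applying $\mathsf{A} = \mathsf{V}\mathsf{S}\mathsf{U}^\top$ to $u_\text{f}$ kills the $\mathsf{U}^\perp$ term and leaves $y_\text{f} = \mathsf{V}\mathsf{S}\mathsf{U}^\top u^\ast = \mathsf{A}u^\ast = y^\ast$; applying $\tilde{\mathsf{A}}$, whose right singular vectors are $\mathsf{U}^\perp$, instead kills the $\mathsf{U}\mathsf{U}^\top u^\ast$ term and leaves $\tilde{y}_\text{f} = \tilde{\mathsf{A}}\,\mathsf{U}^\perp(\mathsf{U}^\perp)^\top u_\text{i} = \tilde{\mathsf{A}} u_\text{i}$, where the last equality uses $\tilde{\mathsf{A}}\mathsf{U}\mathsf{U}^\top = 0$ together with $\mathsf{U}\mathsf{U}^\top + \mathsf{U}^\perp(\mathsf{U}^\perp)^\top = \mathsf{I}_m$. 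The only genuinely delicate point is the passage to the $t\to\infty$ limit, i.e., justifying that $u_\text{f}$ denotes the steady state; this is immediate from the explicit solution $p(t) = p_\infty + e^{-\mathsf{S}^2 t}(p(0) - p_\infty)$ and the positive-definiteness of $\mathsf{S}^2$, so I expect no real obstacle — the proof is essentially a clean diagonalization once the $\mathsf{U}$/$\mathsf{U}^\perp$ split is set up.
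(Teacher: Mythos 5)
Your proof is correct, but it takes a different route from the paper's. The paper argues geometrically: it observes that the right-hand side of~\eqref{eqn:theta_ODE_determ} always lies in $\mathrm{span}\,\mathsf{U}$, so the trajectory is confined to the affine set $u_\text{i}+\mathrm{span}\{\mathsf{U}\}$, and then identifies $u_\text{f}$ as the unique intersection point of that set with the equilibrium set $\mathcal{S}=u^\ast+\mathrm{span}\{\mathsf{U}^\perp\}$ from~\eqref{eqn:equi_set_determ}, solving a small linear system for the coefficients $\lambda_i=u_i^\top(u^\ast-u_\text{i})$. You instead diagonalize the dynamics: writing $p=\mathsf{U}^\top u$, $q=(\mathsf{U}^\perp)^\top u$ decouples the ODE into $\dot p=-\mathsf{S}^2p+\mathsf{S}\mathsf{V}^\top y^\ast$ and $\dot q=0$, and you read off the limit from the explicit solution. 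The two arguments hinge on the same $\mathsf{U}/\mathsf{U}^\perp$ split and the identity $\mathsf{U}\mathsf{U}^\top+\mathsf{U}^\perp(\mathsf{U}^\perp)^\top=\mathsf{I}$, but yours buys something the paper's does not: an actual proof that the flow \emph{converges} to an element of $\mathcal{S}$ (with rate $e^{-\sigma_{\min}^2 t}$ from $\mathsf{S}^2\succ 0$), whereas the paper simply asserts that gradient descent terminates in $\mathcal{S}$ and only locates where it must land. The paper's version is slightly shorter since it never solves the ODE. Your observation that $p_\infty=\mathsf{S}^{-1}\mathsf{V}^\top y^\ast$ is independent of the choice of $u^\ast$ in the solution set mirrors the remark the paper makes after its proof ($\mathsf{U}\mathsf{U}^\top u^\ast=\mathsf{U}\mathsf{S}^{-1}\mathsf{V}^\top y^\ast$). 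Both are complete; no gap.
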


One aspect of this result is that the generated solution, when confined to the space spanned by $\mathsf{A}$, entirely agrees with the given data $u^*$, and when confined to the augmented section, purely agrees with the generated data from the initial guess $u_0$. When $n=m$, $\mathsf{U}^\perp$ only contains the zero vector, so $u_\text{f}\equiv u^*$. The proof of the proposition is rather standard and put in~\Cref{app1} for completeness.

\subsubsection{Stochastic Case}
The situation in the stochastic setting is an analogy to that in the deterministic setting. In the current under-determined situation, the push-forward map $\calG = \mathsf{A}$ has more degrees of freedom to pin in the parameter space than the data offers. Therefore, it is guaranteed that one can find infinitely many $\rho_u$ that achieves the exact match:
\begin{equation*}\label{eqn:exact_match_stoch}
\rho_y = \mathsf{A}_{\sharp}\rho_u = \rho_y^\ast\,.
\end{equation*}

As with the deterministic case, the particular solution in the solution set $\mathcal{S}$ we obtain is determined by the initial guess and the optimization algorithm in a combined manner.
We expect the same to hold in the stochastic case. Furthermore, we use the same notation as in~\eqref{eqn:def_A_ex}. The final conclusion is an analogy of Proposition~\ref{prop:under_determine}.
\begin{theorem}\label{thm:under_determine_stoch}
{Suppose~\eqref{eqn:GF_linear} using the initial data $\rho_u^0$ has an equilibrium solution, and we denote it to be $\rho_u^{\infty}$}, and let $\rho_{y^\text{ex}}^{\infty}$ be the push forward density of $\rho_u^{\infty}$ under the map $\Amat^{\text{ex}}$, i.e., 
    \begin{align} \label{0215}
       \rho_{y^\ex}^\infty =  \Amat^{\text{ex}}_\sharp \rho_u^\infty \,.
    \end{align}
    Then we can uniquely determine the marginal {distributions} of $\rho_{y^\ex}^{\infty}$, in the sense that
    \begin{itemize}
    \item The marginal distribution on $y$ of $\rho_{y^\ex}^{\infty}$ entirely recovers that of the data $\rho_y^*$,
   \item The marginal distribution on $\tilde{y}$ of $\rho_{y^\ex}^{\infty}$ is uniquely determined by that of $\rho_y^{0}$.
\end{itemize}
\end{theorem}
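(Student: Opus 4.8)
The plan is to exploit the linearity of $\mathsf{A}$ to split the dynamics into a component along $\mathrm{span}\,\mathsf{U}$ and a frozen component along $\mathsf{U}^\perp$. The starting point is the particle velocity field underlying~\eqref{eqn:GF_linear}: specializing~\eqref{eqn:GF_u} to $\nabla_u\calG=\mathsf{A}$, each particle drawn from $\rho_u$ evolves by
\[
\frac{\rd}{\rd t}u = -\mathsf{A}^\top\,\xi(\mathsf{A}u)\,,\qquad \xi(y)=\nabla_y\log\frac{\rho_y}{\rho_y^*}(y)\,.
\]
Since the right-hand side always lies in $\mathrm{range}(\mathsf{A}^\top)=\mathrm{span}\,\mathsf{U}$ (as $\mathsf{A}^\top=\mathsf{U}\mathsf{S}\mathsf{V}^\top$), and $(\mathsf{U}^\perp)^\top\mathsf{A}^\top=0$, the projected component $(\mathsf{U}^\perp)^\top u$ is conserved along the flow for every particle. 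This single observation drives both conclusions.

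For the $\tilde y$ marginal, recall from the discussion around~\eqref{eqn:def_A_ex} that the right singular vectors of $\tilde{\mathsf{A}}$ are $\mathsf{U}^\perp$, so $\tilde y=\tilde{\mathsf{A}}u$ is a function of $(\mathsf{U}^\perp)^\top u$ alone. Because this component is frozen for each particle, the law of $(\mathsf{U}^\perp)^\top u$—and hence the pushforward law of $\tilde y$—is invariant in time. Consequently the $\tilde y$ marginal of $\rho_{y^\ex}^\infty$ coincides with the $\tilde y$ marginal of $\mathsf{A}^\ex_\sharp\rho_u^0$, which is the second bullet.

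For the $y$ marginal, I would push the $u$-flow forward through $\mathsf{A}$. Setting $y=\mathsf{A}u$ gives $\frac{\rd}{\rd t}y=-\mathsf{A}\mathsf{A}^\top\xi(y)$, and—crucially because $\mathsf{A}$ is linear—this closes into an autonomous (nonlinear Fokker--Planck) equation for the $y$ marginal alone,
\[
\partial_t\rho_y = \nabla_y\cdot\Big(\rho_y\,B\,\nabla_y\log\frac{\rho_y}{\rho_y^*}\Big)\,,\qquad B=\mathsf{A}\mathsf{A}^\top=\mathsf{V}\mathsf{S}^2\mathsf{V}^\top\,.
\]
In the under-determined regime $r=n$, so $\mathsf{V}$ is square orthogonal and $B\succ0$. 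Differentiating the KL energy then yields $\frac{\rd}{\rd t}D=-\int\rho_y\,|\nabla_y\log(\rho_y/\rho_y^*)|_B^2\le0$, which vanishes only when $\nabla_y\log(\rho_y/\rho_y^*)=0$. Equivalently, invoking the equilibrium condition~\eqref{0312} with~\eqref{eqn:Frechet} gives $\mathsf{A}^\top\xi=0$ on the support of $\rho_u^\infty$, and injectivity of $\mathsf{A}^\top$ (full row rank) forces $\xi=0$; hence $\rho_y^\infty\propto\rho_y^*$ on its support, yielding the first bullet.

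The step I expect to be most delicate is the last one: passing from $\rho_y^\infty\propto\rho_y^*$ on $\mathrm{supp}\,\rho_y^\infty$ to the full identity $\rho_y^\infty=\rho_y^*$. This requires ruling out that the equilibrium support is a strict subset of that of $\rho_y^*$, which I would handle by appealing to convergence of the closed $y$-flow above to its unique invariant measure—strict positive-definiteness of $B$ guaranteeing that this measure is exactly $\rho_y^*$. Verifying that $B\succ0$ (no degenerate directions in the $y$-dynamics) and that the closed $y$-equation is genuinely autonomous are precisely the places where linearity and the under-determined hypothesis $r=n$ are used.
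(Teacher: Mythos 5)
Your proposal is correct and reaches both bullets, but it travels a partly different road from the paper's own proof, which is organized around two lemmas (\Cref{lem:1} and \Cref{lem2}) and then intersects a trajectory set with an equilibrium set. For the $\tilde y$-marginal you argue at the particle (Lagrangian) level: the velocity $-\mathsf{A}^\top\xi(\mathsf{A}u)$ lies in $\mathrm{span}\,\mathsf{U}$, so $(\mathsf{U}^\perp)^\top u$ is frozen and the law of $\tilde y=\tilde{\mathsf{A}}u$ never moves. The paper proves the same invariance in Eulerian/weak form in \Cref{lem:1}, showing $\int\psi(\tilde{\mathsf{A}}u)\,\partial_t\rho_u\,\rd u=0$ by integration by parts, the integrand being a dot product of a vector in $\mathrm{span}\,\mathsf{U}^\perp$ with one in $\mathrm{span}\,\mathsf{U}$; this is the same orthogonality fact, and your version implicitly assumes the Wasserstein gradient flow is the pushforward of $\rho_u^0$ along characteristics, which is consistent with the formal level at which the paper works and with its own particle method. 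For the $y$-marginal, the paper goes straight to the stationarity condition $\Amat^\top\nabla_y\frac{\delta D}{\delta\rho_y}=0$ plus full row rank (\Cref{lem2}); your ``equivalently'' clause is literally that argument, so the two proofs coincide there. What you add is the closed autonomous equation for $\rho_y$ with constant mobility $B=\mathsf{A}\mathsf{A}^\top\succ0$ and the dissipation identity for the KL energy in $y$-space; this buys a genuinely dynamical statement (monotone decay with a coercive mobility) that the paper never writes down, and it is exactly what you need to support the convergence claim you invoke at the end. Finally, the support caveat you flag --- stationarity only gives $\rho_y^\infty\propto\rho_y^*$ on $\mathrm{supp}\,\rho_y^\infty$ --- is present in the paper's \Cref{lem2} as well and is glossed over there; you are, if anything, more careful on this point.
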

A very natural corollary of~\Cref{thm:under_determine_stoch}, when $\Amat$ is fully determined, suggests the unique recovery of the ground truth.
\begin{corollary} 
When $n=m$, i.e., the matrix $\Amat$ is fully determined, under the same assumptions of~\Cref{thm:under_determine_stoch}, the equilibrium solution
\[
       \rho_y^\infty =  \Amat_\sharp \rho_u^\infty  = \rho_y^*\,,
\]
is unique and independent of the initial distribution.
\end{corollary}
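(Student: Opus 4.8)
The plan is to obtain the corollary as a direct degeneration of \Cref{thm:under_determine_stoch} to the square case, in which the augmentation built in~\eqref{eqn:def_A_ex} becomes vacuous. The central observation is that full rank together with $n=m$ forces $r=m$, so the compact SVD factor $\mathsf{U}\in\RR^{m\times r}=\RR^{m\times m}$ is a genuine orthogonal matrix with $\mathsf{U}\mathsf{U}^\top=\mathsf{U}^\top\mathsf{U}=I_m$. Its orthogonal complement $\mathsf{U}^\perp$ therefore reduces to $\{0\}$, leaving no nontrivial direction to augment: the block $\tilde{\Amat}$ and the companion data $\tilde{y}$ in~\eqref{eqn:def_A_ex} are empty, so that $\Amat^\ex=\Amat$ and $y^\ex=y$. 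This mirrors the remark following \Cref{prop:under_determine}, where the same collapse $\mathsf{U}^\perp=\{0\}$ yields $u_\text{f}\equiv u^\ast$ in the deterministic setting.

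First I would record this collapse explicitly and note that, as a consequence, $\rho_{y^\ex}^\infty$ coincides with its $y$-marginal $\rho_y^\infty$, since there is no $\tilde{y}$-coordinate over which to marginalize. Then I would invoke the first bullet of \Cref{thm:under_determine_stoch}, which guarantees that the $y$-marginal of $\rho_{y^\ex}^\infty$ recovers $\rho_y^\ast$ exactly; combined with the previous identification this gives $\rho_y^\infty=\Amat_\sharp\rho_u^\infty=\rho_y^\ast$.

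Independence from the initial data would follow by inspecting where the initialization entered the theorem. The only component of $\rho_{y^\ex}^\infty$ that the theorem permits to depend on the initial distribution is the $\tilde{y}$-marginal (second bullet); since that marginal is absent when $n=m$, the equilibrium $y$-distribution is pinned down entirely by $\rho_y^\ast$, irrespective of $\rho_u^0$. As a closing remark I would observe that, because a square full-rank $\Amat$ is invertible, $\Amat_\sharp$ is a bijection on the associated probability measures, so one may push the identity $\rho_y^\infty=\rho_y^\ast$ backward to obtain the stronger statement $\rho_u^\infty=(\Amat^{-1})_\sharp\rho_y^\ast$, the unique recovered parameter distribution.

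I expect the only point needing care—rather than a genuine obstacle—to be the bookkeeping of the degenerate augmentation: one must verify that the constructions underlying \Cref{thm:under_determine_stoch} remain well-defined (or are correctly read as trivial) when $\mathsf{U}^\perp=\{0\}$, so that applying the theorem in this limiting configuration is legitimate and the ``$\tilde{y}$'' assertions simply drop out rather than becoming ill-posed.
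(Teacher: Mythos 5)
Your proposal is correct and follows essentially the same route the paper intends: the corollary is presented as an immediate consequence of \Cref{thm:under_determine_stoch}, obtained exactly by observing that $n=m$ with full rank forces $\mathsf{U}^\perp=\{0\}$, so the augmentation $\tilde{\Amat},\tilde{y}$ is vacuous, the first bullet pins down all of $\rho_{y^\ex}^\infty=\rho_y^\infty=\rho_y^\ast$, and the only channel through which the initial distribution could enter (the $\tilde{y}$-marginal) is absent. Your closing remark that invertibility of $\Amat$ upgrades this to the unique recovery $\rho_u^\infty=(\Amat^{-1})_\sharp\rho_y^\ast$ is a correct and harmless strengthening consistent with the paper's discussion.
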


To prove~\Cref{thm:under_determine_stoch}, we realize that since the system is under-determined, there are infinitely many solutions. The particular solution we get is the single distribution function that falls at the intersection of the solution set, denoted by $\mathcal{S}_\rho$ and defined in~\eqref{eqn:soln_set_GF} and the gradient flow dynamics~\eqref{eqn:GF_linear}. The proof is similar to what we had in the deterministic setting: we first identify the class of functions on the gradient flow dynamics and then evaluate when and how they intersect with $\mathcal{S}_\rho$.

The following lemma first characterizes its solution by following the flow trajectory.  
\begin{lemma}\label{lem:1}
Suppose one starts the gradient flow equation~\eqref{eqn:GF_linear} with the initial condition $\rho_u(u,0) = \rho^0_u$. Then the solution lives in the following set:
\begin{equation}\label{eqn:rho_stoch_trajectory}
\rho_u(t)\in\{\rho_u^0+h\,,\quad\text{with}\quad\int h(u) \rd u =0\,,\quad\tilde{\mathsf{A}}_{\sharp}h=0\}\,,
\end{equation}
where $\tilde{\mathsf{A}}$ is defined in \eqref{eqn:def_A_ex}.
\end{lemma}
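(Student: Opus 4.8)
The plan is to read the linear gradient flow \eqref{eqn:GF_linear} as a continuity equation and to exploit the fact that its velocity field is confined to a fixed subspace determined by $\mathsf{A}$. Using the chain rule recorded in \eqref{eqn:Frechet}, the Fr\'echet derivative is $\frac{\delta E}{\delta\rho_u}(u) = \frac{\delta D}{\delta\rho_y}(\mathsf{A}u)$, so \eqref{eqn:GF_linear} takes the transport form $\partial_t\rho_u + \nabla_u\cdot(\rho_u v) = 0$ with velocity $v(u,t) = -\mathsf{A}^\top \xi(\mathsf{A}u,t)$, where $\xi = \nabla_y\frac{\delta D}{\delta\rho_y}$ as in \eqref{eqn:def_xi}. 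The single structural observation driving the whole proof is that $v$ takes values in $\mathrm{range}(\mathsf{A}^\top) = \mathrm{span}\,\mathsf{U}$ for every $u$ and every $t$, regardless of the (nonlinear, nonlocal) dependence of $\xi$ on $\rho_y = \mathsf{A}_\sharp\rho_u$.

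Two things must then be checked, matching the two constraints defining the set in \eqref{eqn:rho_stoch_trajectory}. The first, $\int h\,\rd u = 0$, is simply conservation of mass: integrating the continuity equation over $\mathbb{R}^m$ and discarding the divergence as a boundary term gives $\frac{\rd}{\rd t}\int \rho_u\,\rd u = 0$, whence $\int\rho_u(t)\,\rd u = \int\rho_u^0\,\rd u$ and so $\int h\,\rd u = 0$. The second, $\tilde{\mathsf{A}}_\sharp h = 0$, is the real content: it asks that the $\tilde{y}$-marginal $\tilde{\mathsf{A}}_\sharp\rho_u(t)$ be frozen at its initial value. Here I would invoke the fact, recorded just after \eqref{eqn:def_A_ex}, that the right singular vectors of $\tilde{\mathsf{A}}$ span $\mathsf{U}^\perp$; since $\mathsf{U}^\perp\perp\mathsf{U}$ and $v\in\mathrm{span}\,\mathsf{U}$, this yields the pointwise identity $\tilde{\mathsf{A}}\,v(u,t) = 0$ for all $u,t$.

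With $\tilde{\mathsf{A}}v\equiv 0$ in hand, the marginal invariance follows by testing. For an arbitrary smooth, rapidly decaying $\psi(\tilde y)$ on $\mathbb{R}^{m-r}$, I would differentiate $\int \psi(\tilde{\mathsf{A}}u)\,\rho_u(u,t)\,\rd u$ in $t$, substitute the continuity equation, and integrate by parts to obtain $\int \rho_u\,\big(\tilde{\mathsf{A}}^\top\nabla_{\tilde y}\psi(\tilde{\mathsf{A}}u)\big)\cdot v\,\rd u$. The integrand carries the factor $\nabla_{\tilde y}\psi^\top(\tilde{\mathsf{A}}v) = 0$, so the time derivative vanishes for every admissible $\psi$. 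This forces $\tilde{\mathsf{A}}_\sharp\rho_u(t) = \tilde{\mathsf{A}}_\sharp\rho_u^0$, i.e.\ $\tilde{\mathsf{A}}_\sharp h = 0$, completing the characterization.

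I anticipate the main obstacle to be analytic rather than algebraic: justifying that the boundary terms in the two integration-by-parts steps genuinely vanish (requiring adequate decay of $\rho_u$ and control of $v$ at infinity), and that the transport equation admits a well-defined, mass-preserving weak solution along which these manipulations are legitimate, given that $v$ depends nonlocally on $\rho_u$ through $\rho_y$. At the formal level assumed throughout this section, with $\rho_u$ absolutely continuous and sufficiently regular, the geometric identity $\tilde{\mathsf{A}}v = 0$ is the crux and everything else is bookkeeping.
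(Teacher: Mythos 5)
Your proposal is correct and follows essentially the same route as the paper: both arguments reduce to mass conservation from the divergence form plus the observation that the velocity field lies in $\mathrm{range}(\mathsf{A}^\top)=\mathrm{span}\,\mathsf{U}$, which is orthogonal to the row space of $\tilde{\mathsf{A}}$, so that testing against $\psi(\tilde{\mathsf{A}}u)$ and integrating by parts kills the time derivative of the $\tilde y$-marginal. Your phrasing of the orthogonality as the pointwise identity $\tilde{\mathsf{A}}v=0$ is just a repackaging of the paper's $\xi_1\perp\xi_2$ step.
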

\begin{proof}
Let $\rho_u$ be the solution to~\eqref{eqn:GF_linear} and $\rho_u^0$ be the initial distribution. Then $\rho_u(u,\tau)$ differs from $\rho_u^0$ by
\[
\rho_u(u,\tau ) - \rho_u^0(u) =h(u;\tau):= \int_0^\tau \nabla_u\cdot\left(\rho_u(t) \nabla_u\frac{\delta D}{\delta\rho_y(t)} (\mathsf{A} u)\right) \rd t\,,
\]
a function of $u$ parameterized by $\tau$. Essentially, to demonstrate that $\int h(u)\rd{u}=0$ and $\tilde{\mathsf{A}}_\sharp h = 0$, we only need to establish the validity of these two equations for any function $q(u)$ in the form of
\[
q(u)=\nabla_u\cdot\left(\rho_u\nabla_u\frac{\delta D}{\delta\rho_y} (\mathsf{A} u)\right)\,.
\]
The mean-zero property $\int q(u)\rd u=0$ is easy to show given that $q$ has a divergence form. To show $\tilde{\mathsf{A}}_{\sharp}q=0$, we recall that for any $\psi$ that is integrable with respect to $q$ upon composed with $\tilde{\mathsf{A}}$, by performing integration by parts, we have:
    \begin{align*}
    \int\psi(\tilde{\mathsf{A}}u )q(u)\rd u 
    & =  -\int\nabla_u \psi(\tilde{\mathsf{A}} u ) \cdot \nabla_u\left( \frac{\delta D}{\delta \rho_y}(\mathsf{A}u) \right) \rho_u( u)\rd u \\ 
    & = -\int  \underbrace{\tilde{\mathsf{A}}^\top \nabla_y \psi( y ) \big|_{y = \tilde{\mathsf{A}} u}}_{\xi_1}  \cdot  \underbrace{\mathsf{A}^\top \nabla_y\left( \frac{\delta D}{\delta \rho_y}(y) \right) \big|_{y = \mathsf{A} u}}_{\xi_2} \, \rho_u( u) \rd u \,.
    \end{align*}
Note that the row space of $\mathsf{A}$ and $\tilde{\mathsf{A}}$ are $\mathsf{U}$ and $\mathsf{U}^\perp$, respectively, so $\xi_{1}$ and $\xi_2$ belong to these two perpendicular spaces. We then have $\int\psi(\tilde{\mathsf{A}}u)q(u)\rd u=0$, yielding $\tilde{\mathsf{A}}_{\sharp}q=0$.
Since at each infinitesimal time, $q$ satisfies these two conditions, we prove~\eqref{eqn:rho_stoch_trajectory}.
\end{proof}

Next, we study the steady state of the gradient flow system~\eqref{eqn:soln_set_GF}:
\begin{lemma} \label{lem2}
When $D$ is the KL divergence, and let $\rho_u^\ast$ be a reference probability measure (i.e., $\Amat_\sharp \rho_u^* = \rho_y^*$), then all equilibrium of \eqref{eqn:GF_linear} are in the following set: 
\begin{equation}\label{eqn:soln_set_GF}
    \mathcal{S}_\rho=\{\rho_u^\ast+g:\,\int g(u)\rd u=0\,,\mathsf{A}_{\sharp}g=0\}\,.
\end{equation}
Here, $\mathsf{A}_\sharp g=0$ can also be interpreted as
$\int\psi(\mathsf{A}u)g(u)\rd u = 0$, which means that for all $\psi$, when composed with $\mathsf{A}$, is $g$-integrable to 0.
\end{lemma}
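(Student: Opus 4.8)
The plan is to characterize the equilibria directly from the stationarity condition~\eqref{0312}, translate it through the chain rule and the full-rank structure of $\mathsf{A}$ into a statement purely about $\rho_y^\infty$, and then verify membership in $\mathcal{S}_\rho$ by a linearity argument on the push-forward.

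First I would invoke the equilibrium condition~\eqref{0312}, which for any stationary solution $\rho_u^\infty$ of~\eqref{eqn:GF_linear} reads $\nabla_u \frac{\delta E}{\delta \rho_u^\infty} = 0$ on the support of $\rho_u^\infty$. Using the Fr\'echet-derivative identity~\eqref{eqn:Frechet} together with the explicit KL form~\eqref{eqn:KL_obj} and $y = \mathsf{A}u$ (the additive constant $1$ drops out under $\nabla_y$), the chain rule turns this into
\[
\mathsf{A}^\top \left( \nabla_y \log \frac{\rho_y^\infty}{\rho_y^*} \right)(\mathsf{A}u) = 0 \quad \text{on the support of } \rho_u^\infty, \quad \text{where } \rho_y^\infty = \mathsf{A}_\sharp \rho_u^\infty.
\]
Next I would exploit that in the under-determined regime $n \le m$ the matrix $\mathsf{A}$ has full row rank $r = n$, so $\mathsf{A}^\top \in \mathbb{R}^{m \times n}$ has full column rank and is injective. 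Hence the relation above forces $\nabla_y \log(\rho_y^\infty/\rho_y^*) = 0$ at every $y = \mathsf{A}u$ with $u$ in the support of $\rho_u^\infty$, i.e.\ on the support of $\rho_y^\infty$; normalizing the resulting constant ratio of two probability densities yields $\rho_y^\infty = \rho_y^*$.

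Finally I would set $g := \rho_u^\infty - \rho_u^*$. Both factors are probability densities, so $\int g\,\rd u = 0$, and by linearity of the push-forward on signed measures $\mathsf{A}_\sharp g = \mathsf{A}_\sharp \rho_u^\infty - \mathsf{A}_\sharp \rho_u^* = \rho_y^\infty - \rho_y^* = 0$ (using the defining property $\mathsf{A}_\sharp \rho_u^* = \rho_y^*$ of the reference measure). This places $\rho_u^\infty$ in $\mathcal{S}_\rho$ and closes the argument.

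The step I expect to be the main obstacle is passing from ``$\nabla_y \log(\rho_y^\infty/\rho_y^*)$ vanishes on the support'' to ``$\rho_y^\infty = \rho_y^*$'': one must rule out that the ratio takes distinct constants on disconnected pieces of the support, or that $\rho_y^\infty$ is supported on a strict subset of $\mathrm{supp}(\rho_y^*)$, which would permit a normalizing constant different from one. I would lean on the standing absolute-continuity (and positivity) assumption on the densities stated for this section to guarantee a common full support, making the constant equal to one, and I would make this hypothesis explicit in the proof.
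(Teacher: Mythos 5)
Your argument is correct and follows essentially the same route as the paper's own proof: invoke the stationarity condition~\eqref{0312}, pass through~\eqref{eqn:Frechet} and the KL form~\eqref{eqn:KL_obj} to get $\mathsf{A}^\top\nabla_y\log(\rho_y^\infty/\rho_y^*)=0$, use the full row rank of $\mathsf{A}$ to conclude $\rho_y^\infty\propto\rho_y^*$ and hence $\rho_y^\infty=\rho_y^*$ after normalization, and then set $g=\rho_u^\infty-\rho_u^*$. Your closing caveat about disconnected supports and strict support inclusion is a genuine subtlety that the paper's proof passes over silently, so making the common-full-support hypothesis explicit, as you propose, is a worthwhile refinement rather than a deviation.
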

\begin{proof}
Recall~\eqref{eqn:GF_linear} and the discussion leading to~\eqref{0312}, we have that the equilibrium set consists of the states $\rho^\infty_u$ at which the Fr\'echet derivative is trivial on the support. This, combined with~\eqref{eqn:Frechet}, gives:
\begin{equation*} \label{0313}
   \nabla_u \left.\frac{\delta E}{\delta \rho_u}\right|_{\rho_u^\infty}(u) =  \nabla_u \left.\frac{\delta D}{\delta \rho_y}\right|_{\mathsf{A}\sharp\rho_u^\infty} (\Amat u)  = \left.\Amat^\top \nabla_y \frac{\delta D}{\delta \rho_y}\right|_{\mathsf{A}\sharp\rho_u^\infty} (\Amat u) = 0 \,.
\end{equation*}
Since $\Amat$ is a flat matrix with full rank,
\begin{align} \label{0407}
\left.\Amat^\top \nabla_y \frac{\delta D}{\delta \rho_y}\right|_{\mathsf{A}\sharp\rho_u^\infty} (\Amat u) = 0\,,\quad \text{on the support of}~ \rho_u^\infty \,.
\end{align}
In the case when $D$ is the KL divergence, based on \eqref{eqn:KL_obj}, we have that  $\rho_y^\infty \propto \rho_y^*$. Since they both are probability measures, we can conclude that $\rho_y^\infty = \rho_y^*$. Since $\rho_y^\infty = \Amat_\sharp \rho_u^\infty$, this means that $\rho_u^\infty$ is in $\mathcal S_\rho$ defined in~\eqref{eqn:soln_set_GF}. 
\end{proof}

\begin{remark}
    To show that $\rho_y^\infty = \rho_y^*$ from \eqref{0407}, $D$ only needs to be strictly displacement convex with respect to $\rho_y$ (see Section 5.2.1 in \cite{villani2021topics} for its definition). Indeed, note that if we view $D(\rho_y^\infty, \rho_y^*)$ as a functional of $\rho_y^\infty$, the equilibrium set of its Wasserstein gradient flow is \eqref{0407}. Then under the convexity condition mentioned above, the equilibrium set has one element \cite{mccann1997convexity}, which is the unique minimizer of $D(\rho_y^\infty, \rho_y^*)$.
\end{remark}

These two lemmas prepare us to investigate the specific solution by following~\eqref{eqn:GF_linear} from an initial $\rho_u^0$. We are ready to prove the main theorem.
\begin{proof}[Proof for Theorem~\ref{thm:under_determine_stoch}]
From~\eqref{eqn:rho_stoch_trajectory}-\eqref{eqn:soln_set_GF} we see that the equilibrium $\rho_u^\infty$ admits the following expression 
    \begin{align*} 
        \rho_u^\infty = \rho_u^0 + h = \rho_u^* + g\,,  
    \end{align*}
    where $\int h \rd u = \int g \rd u = 0$ and $\Amat_\sharp g = 0$, $\tilde \Amat_\sharp h = 0$. Therefore 
    \[
    \Amat_\sharp \rho_u^\infty = \Amat_\sharp \rho_u^*\,, \quad
    \tilde \Amat_\sharp \rho_u^\infty =  \tilde \Amat_\sharp \rho_u^0\,.
    \]
   That is, for any integrable test function $\psi $
   \begin{equation}\label{0217}
  \int \psi(\Amat u) \rho_u^\infty (u) \rd u = \int \psi(\Amat u) \rho_u^* (u) \rd u \,,\quad \int \psi(\tilde \Amat u) \rho_u^\infty (u) \rd u = \int \psi(\tilde \Amat u) \rho_u^0 (u) \rd u \,.
   \end{equation}
   From \eqref{0215}, we have that $\rho_{y^\ex}^\infty (\Amat^{\text{ex}} u ) \det{(\Amat^{\text{ex}})} = \rho_u^\infty (u)$. Using the notation in \eqref{eqn:def_A_ex}, the two equations above become
\begin{align*}
  &  \int \psi(y) \rho_y^\infty (y) \rd y =\underbrace{\int \psi(y) \rho_{y^\ex}^\infty (y^\ex) \rd y^\ex = \int \psi(y) \rho_{y^\ex}^\ast (y^\ex) \rd y^\ex}_{\text{from}~\eqref{0217}}= \int \psi(y) \rho_y^* (y) \rd y\,,
\end{align*}
where we used $\rho_{y^\ex}^* =  \Amat^{\text{ex}}_\sharp \rho_u^*$ and the definition of the marginal distributions $\rho^{*}_y(y) = \int\rho^{*}_{y^\ex}(y^\ex=[y,\tilde{y}])\rd\tilde{y}$, and $\rho^{\infty}_y(y) = \int\rho^{\infty}_{y^\ex}(y^\ex=[y,\tilde{y}])\rd\tilde{y}$ for the first and the last equal signs. Similarly the second equation in~\eqref{0217} becomes
\begin{align*}
  \\ &  \int \psi(\tilde{y}) \rho_{y^\ex}^\infty ({y^\ex}) \rd y^\ex = \int \psi(\tilde{y}) \rho_{y^\ex}^0 ({y^\ex}) \rd y^\ex \,,\quad \text{where}~ \rho_y^0 =  \Amat^{\text{ex}}_\sharp \rho_u^0\,.
   \end{align*}
Considering that $\psi$ can be chosen as any function, we obtain all the moments, and thus the marginal distribution $\rho_{y^\ex}^\infty$ on $y$ and $\tilde{y}$, finishing the proof.
\end{proof}

\subsection{Over-Determined Scenario}
Over-determined scenario corresponds to the case when $n>m$, so there are more data to fit than the to-be-determined parameters. It is often unlikely to find a solution that satisfies the equation exactly, but one can nevertheless find the best approximation that minimizes a chosen misfit function. 
\subsubsection{Deterministic Case}
As an example, we look for the configuration that provides the minimum misfit under the vector 2-norm. That is,
\[
\min_u\frac{1}{2}\|\mathsf{A}u - y^*\|_2^2\,.
\]
For a linear system like this, the minimizer is explicit:
\begin{equation}\label{eq:det_over}
u^\ast=(\mathsf{A}^\top\mathsf{A})^{-1}\mathsf{A}^\top y^*=:\mathsf{A}^\dagger y^*\,,
\end{equation}

and hence, calling~\eqref{SVD}
\begin{equation}\label{label:opt_mu_determ}
y = \mathsf{A}u^\ast = \mathsf{A}\mathsf{A}^\dagger y^*= \mathsf{V}\mathsf{V}^\top y^*\,,\quad\text{or equivalently}\quad y = y^\ast_{\mathsf{A}}=\text{Proj}_\mathsf{V}\,  y^*\,.
\end{equation}
In other words, $y$ is $y^*$ projected onto the column space of $\mathsf{V}$, (also the column space of $\Amat$), and thus $\mathsf{V}^\top y = \mathsf{V}^\top y^*$ and $\left(\mathsf{V}^\perp\right)^\top y=0$.

\subsubsection{Stochastic Case}
In the stochastic case, we expect to obtain a similar result that suggests $\rho_y^\infty$ is a  ``projection'' of $\rho_y^\ast$ onto the column space of $\mathsf{A}$. More specifically, we have the following theorem that characterizes the equilibrium $\rho_y^\infty $ when $\rho_y^*$ has a separable structure.

\begin{theorem}\label{thm:over_determin_stoc}
 Let $\rho_u^{\infty} $ be the equilibrium solution to~\eqref{eqn:GF_linear}. Consider $D$ to be KL, i.e., $ D(\rho_y, \rho_y^*) = \int \rho_y \log \frac{\rho_y}{\rho_y^*} \rd y$,  and $\rho_y^*$ has the separable form: 
\begin{equation*}\label{0315}
 \rho_y^* (y)= e^{f_1(y_\Amat)}e^{ f_2(y_{\Amat^\perp})}
\end{equation*}
where $y_\Amat =\Vmat\Vmat^\top y$, {with $\Vmat$ being the column space of $\Amat$,} and $y_{\Amat^\perp}=y-y_\Amat$ being the perpendicular component. Then 
\begin{equation*}\label{0316}
\rho_y^\infty(y) = \Amat\Amat^\dagger_\sharp\rho_y^\ast\propto  e^{f_1(y_\Amat)} \,.
\end{equation*}
\end{theorem}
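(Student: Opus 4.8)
The plan is to read off the equilibrium condition directly, following the same route as in the proof of Lemma~\ref{lem2}, and then exploit the separable structure of $\rho_y^\ast$ together with the fact that, in the over-determined regime, $\mathsf{A}^\top$ annihilates the orthogonal complement $\mathrm{col}(\mathsf{V}^\perp)$. Since $E(\rho_u) = D(\mathsf{A}_\sharp\rho_u, \rho_y^\ast)$ and the Fr\'echet derivative is given by~\eqref{eqn:Frechet}, the equilibrium $\rho_u^\infty$ satisfies, on its support,
\[
\mathsf{A}^\top \nabla_y \frac{\delta D}{\delta\rho_y}\Big|_{\rho_y^\infty}(\mathsf{A}u) = 0, \qquad \rho_y^\infty = \mathsf{A}_\sharp\rho_u^\infty,
\]
exactly as in~\eqref{0407}. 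With $D$ the KL divergence, \eqref{eqn:KL_obj} yields $\nabla_y\frac{\delta D}{\delta\rho_y} = \nabla_y\log\rho_y^\infty - \nabla_y\log\rho_y^\ast$, so the whole statement reduces to solving this stationarity relation.

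The first step is to record that $\rho_y^\infty$ is supported on $\mathrm{col}(\mathsf{A}) = \mathrm{col}(\mathsf{V})$, an $m$-dimensional subspace of $\RR^n$, because $y = \mathsf{A}u = \mathsf{V}\mathsf{S}\mathsf{U}^\top u \in \mathrm{col}(\mathsf{V})$. I would therefore pass to the intrinsic coordinate $w = \mathsf{V}^\top y \in \RR^m$ on this subspace. Since $\mathsf{V}^\top\mathsf{V} = I_r$, the composite map is $u \mapsto w = \mathsf{S}\mathsf{U}^\top u$, which is a square invertible linear map (both $\mathsf{S}$ and $\mathsf{U}$ are invertible because $r = m$); hence $\rho_y^\infty$ becomes a genuine density $p(w)$ on $\RR^m$ with $y_\mathsf{A} = \mathsf{V}w$.

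The second step uses $\ker\mathsf{A}^\top = \mathrm{col}(\mathsf{V})^\perp = \mathrm{col}(\mathsf{V}^\perp)$: writing $\mathsf{A}^\top = \mathsf{U}\mathsf{S}\mathsf{V}^\top$ with $\mathsf{U}\mathsf{S}$ invertible, the stationarity condition is equivalent to $\mathsf{V}^\top\bigl(\nabla_y\log\rho_y^\infty - \nabla_y\log\rho_y^\ast\bigr) = 0$ on the support. The separable structure now pays off: differentiating $\log\rho_y^\ast = f_1(\mathsf{V}\mathsf{V}^\top y) + f_2((I - \mathsf{V}\mathsf{V}^\top)y)$ and applying $\mathsf{V}^\top$, the $f_2$ contribution is killed because $\mathsf{V}^\top(I - \mathsf{V}\mathsf{V}^\top) = 0$, leaving $\mathsf{V}^\top\nabla_y\log\rho_y^\ast = \mathsf{V}^\top\nabla f_1|_{y_\mathsf{A}}$. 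Rewriting the tangential operator $\mathsf{V}^\top\nabla_y$ as $\nabla_w$ by the chain rule, the relation collapses to $\nabla_w\bigl[\log p(w) - f_1(\mathsf{V}w)\bigr] = 0$ on the support, so $\log p = f_1(\mathsf{V}w) + \mathrm{const}$ on each connected component, i.e.\ $p(w) \propto e^{f_1(\mathsf{V}w)}$, which is the claimed $\rho_y^\infty(y) \propto e^{f_1(y_\mathsf{A})}$.

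The main obstacle I anticipate is making rigorous sense of $\nabla_y\log\rho_y^\infty$ when $\rho_y^\infty$ is singular in $\RR^n$, since it lives on the lower-dimensional set $\mathrm{col}(\mathsf{V})$ and its normal component is genuinely ill-defined. The resolution, which I would state carefully, is that only the tangential component $\mathsf{V}^\top\nabla_y$ ever enters the dynamics: the particle velocity is $-\mathsf{A}^\top\xi = -\mathsf{U}\mathsf{S}\,(\mathsf{V}^\top\xi)$ and thus depends on $\xi$ only through $\mathsf{V}^\top\xi$, and this tangential gradient is precisely $\nabla_w\log p(w)$ in the reduced coordinate, which is well-defined. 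The same computation that removed $f_2$ shows that $f_2$ never influences the flow, explaining structurally why the answer retains only the $f_1$ factor. Finally, I would note that a single normalizing constant (hence global proportionality rather than per-component) requires the support to be connected, or $p$ positive throughout, and that $e^{f_1(y_\mathsf{A})}$ be integrable over $\mathrm{col}(\mathsf{V})$ so that $\rho_y^\infty$ is a probability density, which I take for granted as the theorem implicitly does.
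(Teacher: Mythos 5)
Your proposal is correct and follows essentially the same route as the paper's proof: both read off the stationarity condition $\Amat^\top\nabla_y\frac{\delta D}{\delta\rho_y}\big|_{\rho_y^\infty}=0$, use the fact that the support of $\rho_y^\infty$ lies in the column space of $\Vmat$, and exploit separability so that only the $f_1$ component survives after projecting onto that column space. Your extra step of passing to the intrinsic coordinate $w=\Vmat^\top y$ is a slightly more careful treatment of the singular support (the paper simply posits $\rho_y^\infty\propto e^{g(y_\Amat)}$ and works with $\nabla_{y_\Amat}$ directly), but it is a presentational refinement rather than a different argument.
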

This Theorem is the counterpart of its deterministic version stated in~\eqref{label:opt_mu_determ}, showing that we can identify partial information of $\rho_y^\infty$---the part that is in the column space of $\Amat$ (or equivalently $\Vmat$). The perpendicular direction information is projected out.

\begin{proof}
To start off, we assume $\rho_y^\infty(y)\propto e^{g(y_\Amat)}$, and we will show that $g$ is the same as $f_1$ up to a constant addition. It is safe to assume $\rho_y^\infty(y)$ has no $y_{\Amat^\perp}$ component because it is pushed forward by $\Amat$, so its support is confined to the range of $\Amat$. 

Recall the argument leading to~\eqref{0312}, and combine it with the calculation~\eqref{eqn:Frechet}. We have that the equilibrium of \eqref{eqn:GF_linear} in this case satisfies
\[
      \nabla_u \left.\frac{\delta E}{\delta \rho_u}\right|_{\rho_u^\infty} =  \nabla_u \left.\frac{\delta D}{\delta \rho_y}\right|_{\rho_y^\infty} (\Amat u)  =\Amat^\top \nabla_y \frac{\delta D}{\delta \rho_y} \bigg|_{\rho_y^\infty} (\Amat u) = 0\,\,\Rightarrow\,\, \nabla_y \frac{\delta D}{\delta \rho_y}\bigg|_{\rho_y^\infty}(\Amat u) \in\mathsf{A}^\perp\,.
   \]
Considering $\nabla_y = \Vmat\Vmat^\top\nabla_{y_\Amat}+ \Vmat^\perp(\Vmat^\perp)^\top\nabla_{y_{\Amat^\perp}}$, we should have, for all $u$:
\[
 \nabla_{y_\Amat}\frac{\delta D}{\delta \rho_y}\bigg|_{\rho_y^\infty}(\Amat u)  =0\,.
\]
Noting $\nabla_{y_\Amat}\frac{\delta D}{\delta \rho_y}\bigg|_{\rho_y^\infty}  =\nabla_{y_\Amat}\log \rho_y^\infty - \nabla_{y_\Amat}\log\rho_y^\ast$, we then have
\[
\nabla_{y_\Amat}g = \nabla_{y_\Amat}f_1\,,\quad\text{on}\quad\text{Range}(\Amat)\,.
\]
Thus, $g=f_1$ up to a constant, finishing the proof.

\end{proof}

\subsection{Setting Data Discrepancy $D$ to be $W_2$}
Most results developed in this paper so far sets $D$ as the KL divergence. We switch gear and adopt~\eqref{eqn:discrepancy_w2} in this subsection, by setting $D$ to be the quadratic Wasserstein metric $W_2$. In this case, the variational formulation~\eqref{eqn:argmin_sde} becomes
\begin{equation}\label{eq:w2_admin_sde}
\min_{\rho_u\in \mathcal{P}(\mathcal{A})}\; W_2(\mathcal{G}_{\sharp}\rho_u, \rho^\ast_y)\,,
\end{equation}
with $\mathcal{P}(\mathcal{A})$ denoting all probability distributions over the parameter domain $\mathcal{A}$ as usual.

In the fully determined case where $\mathsf{A}$ is invertible, and the minimizer will be uniquely given by ${\mathsf{A}^{-1}}_\sharp \rho^\ast_y$. In the under-determined case, the same argument in Section~\ref{sec:under_determine} still holds and no uniqueness can be obtained. However, when $\mathsf{A}$ is over-determined with full column rank and $\rho_y^*$ is absolutely continuous with respect to the Lebesgue measure, we claim that the unique minimizer to~\eqref{eq:w2_admin_sde} is ${\mathsf{A}^\dagger}_\sharp \rho_y^*$.

\begin{theorem}\label{thm:over_determin_stoc_W}
Consider $D$ to be $W_2$, $\mathcal{A} = \mathbb{R}^m$, and set $\mathcal{G}=\Amat$, an over-determined matrix with full column rank. Moreover, assume the reference data distribution  $\rho_y^*$ is absolutely continuous with respect to the Lebesgue measure, and supported on $\mathcal{R}$, the closure of a bounded connected open set. Then we have that:
\begin{itemize}
    \item The variational problem~\eqref{eq:w2_admin_sde} has a unique minimizer $\mathsf{A}^\dagger_\sharp \rho_y^*$.
    \item The Wasserstein gradient flow of~\eqref{eqn:argmin_sde3}  with $D$ being the $W_2$ metric, as given in~\eqref{eq:GF_W2}, has a unique equilibrium, which is also the unique minimizer $\rho_u^\infty  = \mathsf{A}^\dagger_\sharp \rho_y^*$.
\end{itemize}

\end{theorem}

\begin{proof}
First, we show that $\mathsf{A}^\dagger_\sharp \rho_y^*$ is the global minimizer. Define a set $\mathcal{S}=\{\rho:\rho = \mathsf{A}_\sharp \rho_u\; \text{for}\;\rho_u \in \mathcal{P}(\mathcal{A})\}$ collecting all probability measures pushed by $\mathsf{A}$. 
Then for any $\rho\in\mathcal{S}$, we have $\text{supp}(\rho)\subset \text{col}(\mathsf{A})$.  Furthermore,
\begin{eqnarray}
    W_2^2(\rho, \rho_y^*) &=& \int_y |T_\rho(y) - y|^2 \rho_y^*(y) \rd y \label{eq:1}\\
    &\geq& \int_y |\mathsf{A}\mathsf{A}^\dagger y - y|^2 \rho_y^*(y) \rd y \label{eq:2}\\
    &\geq & W_2^2\left((\mathsf{A}\mathsf{A}^\dagger)_\sharp\rho_y^*,\; \rho_y^*\right)\,.\label{eq:3}
\end{eqnarray}
In the derivation,~\eqref{eq:1} holds by definition where we denote by $T_\rho$ the optimal transport map from $\rho_y^*$ to $\rho$. The existence of the optimal map is guaranteed since  $\rho_y^*$ is absolutely continuous~\cite[Thm.~1.17]{santambrogio2015optimal}. Inequality~\eqref{eq:2} holds because of the pointwise inequality:
\[
\argmin\limits_{x\in\text{col}(\mathsf{A})} |x - y|^2 = \mathsf{A}\mathsf{A}^\dagger y\quad\Rightarrow\quad|\mathsf{A}\mathsf{A}^\dagger y - y|^2 \leq |T_\rho(y) - y|^2\,,
\]
where we deployed the fact that $\text{supp}(\rho)\subset \text{col}(\mathsf{A})$ and thus $\text{range}(T_\rho) \subset \text{col}(\mathsf{A})$.
Inequality~\eqref{eq:3} comes from the definition of the Wasserstein distance (note that we do not necessarily claim $\mathsf{A}\mathsf{A}^\dagger$ is the optimal map from $\rho_y^*$ to $\left(\mathsf{A}\mathsf{A}^\dagger\right)_\sharp\rho_y^*$). Consequent to this derivation, we have:
\[
\mathsf{A}_\sharp \left(\mathsf{A}^\dagger_\sharp\rho_y^*\right) = \left(\mathsf{A} \mathsf{A}^\dagger\right) _\sharp\rho_y^* =\argmin_{\rho = \mathsf{A}_\sharp \rho_u,\, \forall\rho_u\in\mathcal{P}(\mathcal{A})} W_2(\rho,\rho_y^*)\,,
\]
or equivalently
\begin{equation}\label{eq:W2_over}
\mathsf{A}^\dagger_\sharp\rho_y = \argmin_{\rho_u \in\mathcal{P}(\mathcal{A})} W_2(\mathsf{A}_\sharp\rho_u,\rho_y^*)\,,
\end{equation}
finishing the proof that $\mathsf{A}^\dagger_\sharp \rho_y^*$ is the global minimizer.

Next, we show that $\mathsf{A}^\dagger_\sharp \rho_y^*$ is also the unique equilibrium of the corresponding Wasserstein gradient flow equation~\eqref{eq:GF_W2}. That is, the equilibrium of the gradient flow equation is the global minimizer of~\eqref{eqn:argmin_sde3}. At equilibrium, we have
\[
\nabla_u \cdot \left( \rho_u^\infty  \nabla_u \phi^*( \mathsf{A}u  ) \right) = 0, \quad \text{on the support of}~ \rho_u^\infty\,,
\]
where $\phi^*$ is the Kantorovich potential associated with $\rho_y^\infty = \mathsf{A}_\sharp\rho_u^\infty$; see~\eqref{eq:W2_dual}-\eqref{eq:W2_dual_Frechet}. This implies that 
\begin{equation}\label{eq:phi_eq2}
\Amat^\top \nabla_y \phi^*( y  ) = 0 \,, \quad  \text{on the support of}~ \rho_y^\infty  \,.
\end{equation}
Based on~\eqref{eq:phi_eq2},  we deduce that $$
\nabla_y \phi^*: \text{supp}(\rho_y^\infty)\subset \text{col}(\Amat) \longrightarrow  \text{col}(\Amat)^\perp,
$$
where  $\text{col}(\Amat)^\perp$ denotes the orthogonal complement of the linear  subspace $\text{col}(\Amat)$. 

 On the other hand, one can express the Kantorovich potential with respect to $\rho_y^*$, denoted by $\psi^*$, using the c-transform:
 \[
 \psi^*(x) =  (\phi^*(y))^c :=  \min_{y\in \text{col}(\Amat)}    \left\{ \frac{1}{2}\|x-y\|^2 - \phi^*(y)\right\}\,, \quad x\in \mathcal{R}\,.
\]
For any $x \in \mathcal{R}$, we denote by $y_x$ the minimizer of the above c-transform.  Since the minimization problem is strictly convex, the first-order optimality condition is also sufficient:
\begin{equation}\label{eq:opt_cond}
    x - y_x - \nabla_y \phi^*(y_x) \perp \text{col}(\Amat)\,.
\end{equation}
Based on~\eqref{eq:phi_eq2}, we know $\nabla_y \phi^*(y_x)  \in \text{col}(\Amat)^\perp$. Moreover, $y_x \in \text{col}(\Amat)$. Therefore, \eqref{eq:opt_cond} is equivalent to the following orthogonal decomposition of $x$:
\[
x = y_x + z,\quad z\in \text{col}(\Amat)^\perp\,.
\]
Since orthogonal decomposition with respect to $\text{col}(\Amat)$ is unique, we obtain that $y_x  = \Amat\Amat^\dagger x$, $\forall x \in \mathcal{R}$. Based on the optimal transportation theory~\cite{santambrogio2015optimal,villani2021topics}, $y_x = \Amat\Amat^\dagger x = T(x)$, where $T$ is the optimal transport map from $\rho_y^*$ to $\rho_y^\infty$. That is,
$$
\rho_y^\infty = (\Amat \Amat^\dagger)_\sharp \rho_y^*\,,
$$
which implies that $\rho_u^\infty = \Amat^\dagger_\sharp \rho_y^*$.
\end{proof}

Equation \eqref{eq:W2_over} shows that we have a simple form for the minimizer of the variational problem~\eqref{eqn:argmin_sde} and the equilibrium of the gradient flow equation~\eqref{eqn:argmin_sde3}  if $D$ is $W_2$ and $\mathsf{A}$ is tall skinny with full column rank. Just as $\Amat^\dagger y^*$ provides the optimizer in the deterministic case (see~\eqref{eq:det_over}), the minimizer in the stochastic context is its vanilla extension $\mathsf{A}^\dagger_\sharp\rho_y^*$ under the $W_2$ case, also the only equilibrium of the Wasserstein gradient flow under conditions.

\section{Numerical Examples}\label{sec:numerics}
This section presents a few numerical inversion examples using the proposed particle method~\eqref{eqn:particle_method}. Throughout the section, we use the KL divergence~\eqref{eqn:KL_obj} as the objective function and the $W_2$ metric to determine the geometry. {In all examples in this section, we set $\epsilon = 0.5$ as the hyperparameter in the density estimation step~\eqref{eqn:particle_KL}. The time step size to discretize the gradient dynamics~\eqref{eqn:particle_KL} is chosen using the Armijo backtracking line search.}
 
\subsection{Linear Push-Forward Map}
We first present examples with the linear push-forward map $y = \mathcal{G}(u)=\mathsf{A}u$, which we theoretically studied in~\Cref{sec:theory}. Given the matrix $\mathsf{A}$ is fully-, under-, or over-determined, we show that there are different phenomena in inversion using the gradient flow approach~\eqref{eqn:GF}. 

\subsubsection{Fully-Determined Case}
First, we consider a fully-determined case. Let $\mathsf{A} = \text{diag}([2,0.75])$, and the true parameter $u\sim \mathcal{N}(0,I)$. As a result, the reference data  $y\sim\mathcal{N}(0,\mathsf{A}\mathsf{A}^\top)$, represented by an empirical distribution with $1000$ i.i.d.~particles. The initial guess for the parameter is the uniform distribution $\mathcal{U}[-3,3]^2$, which is also represented by $1000$ i.i.d.~samples. We then follow the particle method~\eqref{eqn:particle_method} to implement the gradient flow equation~\eqref{eqn:GF}.  The convergence results after $30$ iterations are shown in~\Cref{fig:Full2to2}. We have recovered both the parameter distribution and the data distribution well.

\begin{figure}
    \centering
    \includegraphics[width = 0.8\textwidth]{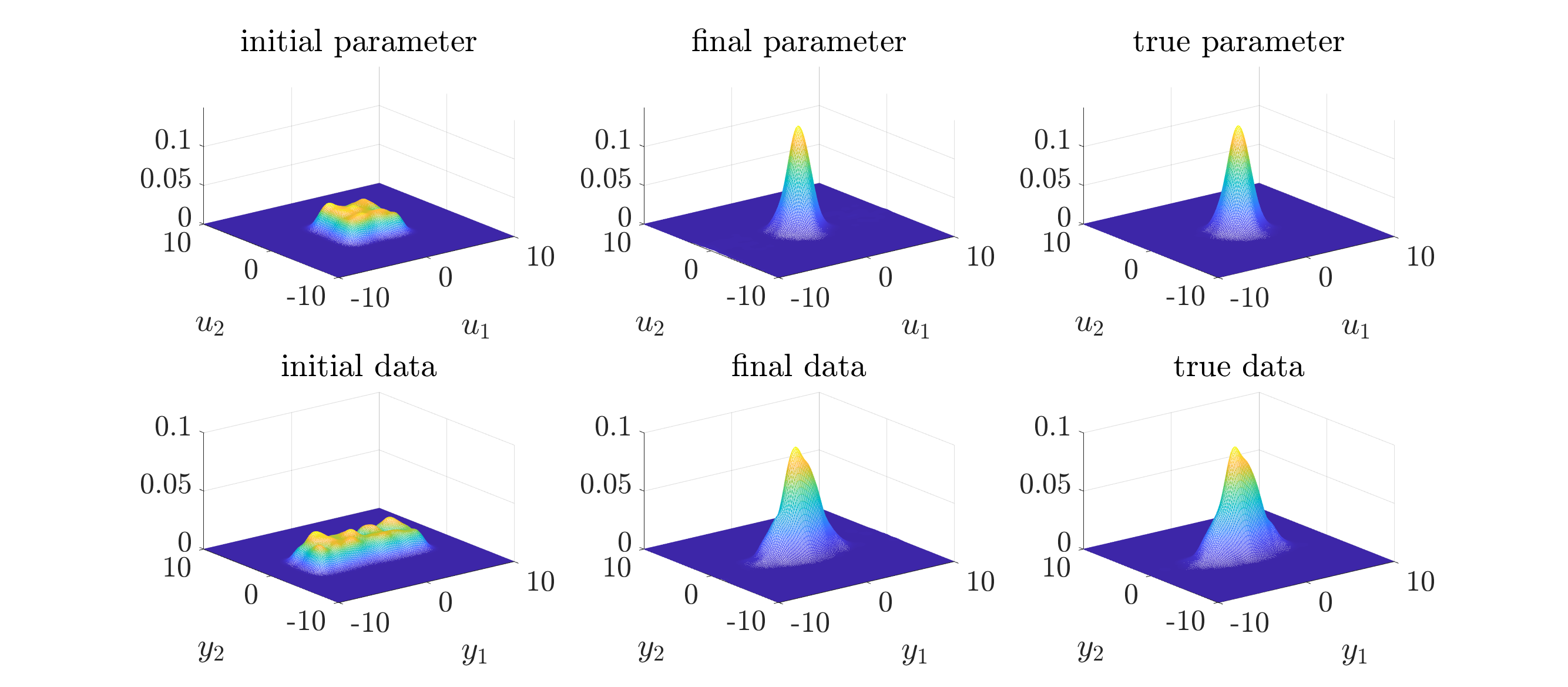}
    \caption{Parameter and data distributions in the fully-determined case with the map $T(x) = \mathsf{A}x$ where $\mathsf{A} = \text{diag}([2,0.75])$.}
    \label{fig:Full2to2}
\end{figure}

\begin{figure}
    \centering
   \subfloat[Parameter distribution with initial guess $u^1$]{ \includegraphics[width = 0.8\textwidth]{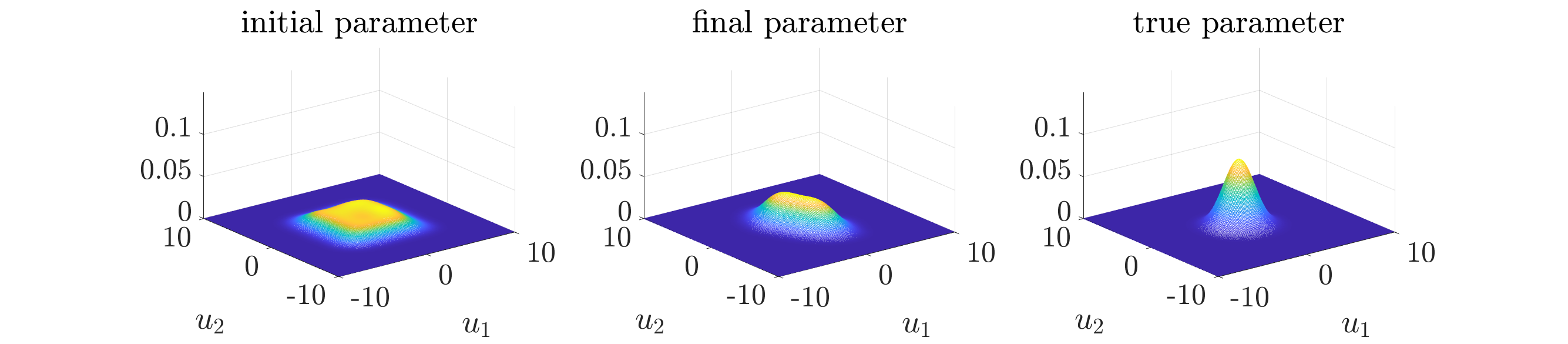}}\\
   \subfloat[Parameter distribution with initial guess $u^2$]{ \includegraphics[width = 0.8\textwidth]{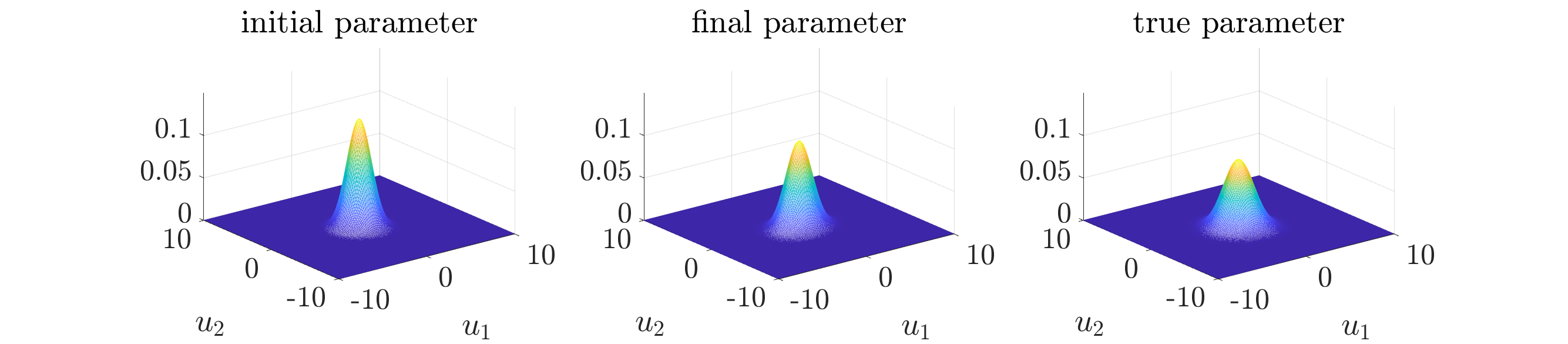}}\\
      \subfloat[Data with initial guess $u^2$]{ \includegraphics[width = 0.4\textwidth]{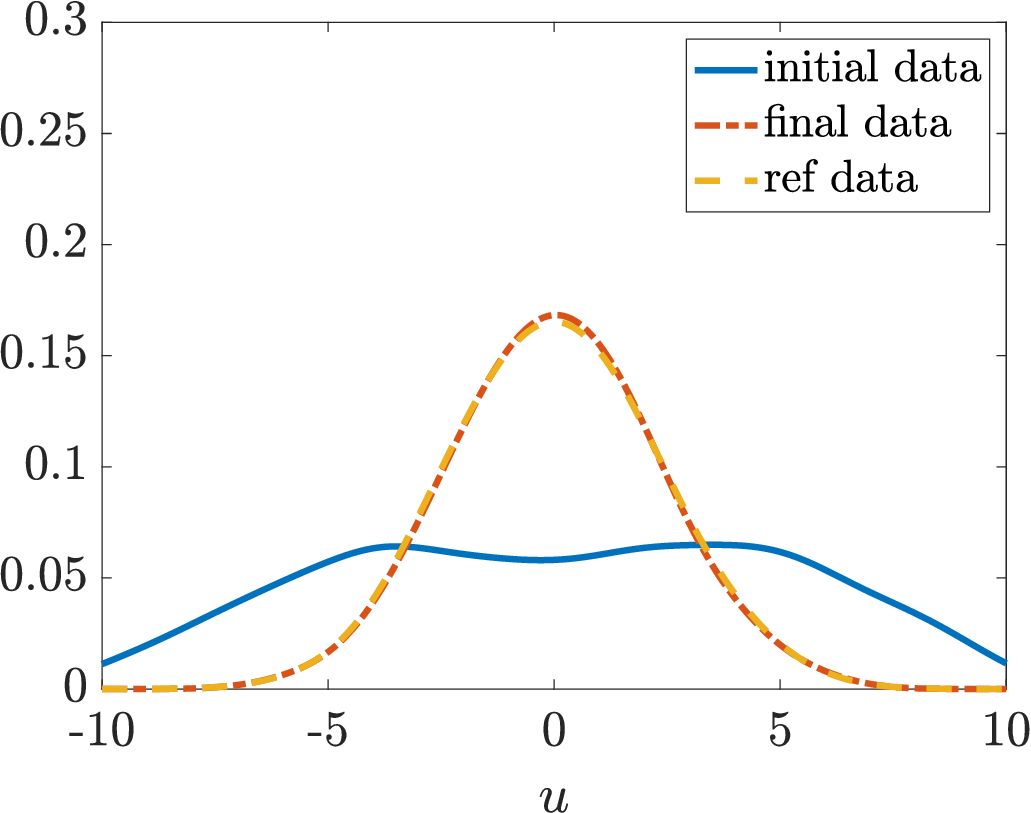}}
      \hspace{0.1cm}
   \subfloat[Data with initial guess $u_2$]{ \includegraphics[width = 0.4\textwidth]{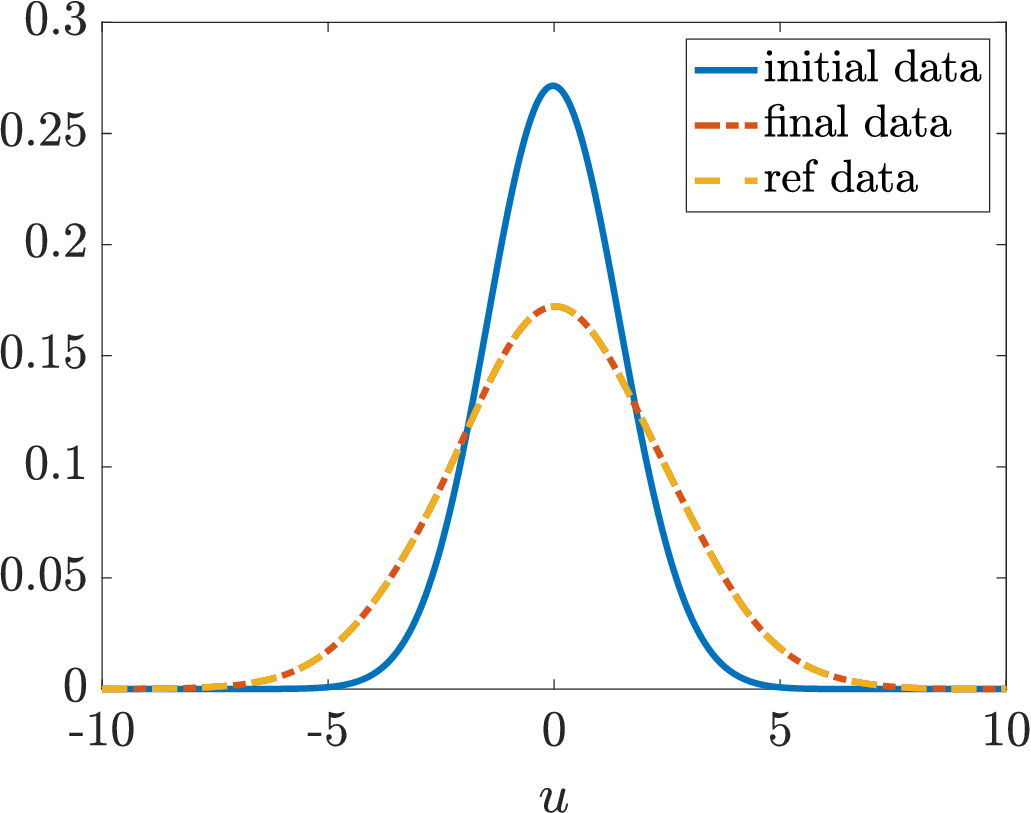}}
    \caption{Under-determined case under two  initial distributions, $u^1$ and $u^2$, where the map $\mathcal{G}(u) =  \mathsf{A}u$, $\mathsf{A} = [2, 0.75]$. The reference data distribution is computed using the true parameter distribution.}
    \label{fig:under2to1}
\end{figure}

\subsubsection{Under-Determined Case}
Next, we consider that  $\mathsf{A} = [2, 0.75]$, which is under-determined. The true parameter distribution is $\mathcal{N}(0,I)$ and the reference  data distribution $y\sim\mathcal{N}(0,\mathsf{A}\mathsf{A}^\top=4.5625)$, which we use $3000$ i.i.d.~samples to represent. Since $\mathsf{A}$ is under-determined, based on our analysis in~\Cref{sec:theory}, there are infinitely many solutions $u$, the same as the deterministic case. Since we use the gradient flow formulation~\eqref{eqn:GF}, the algorithm can only find one of the solutions, which is determined by the initial condition. We then demonstrate this through numerical examples.

We consider two different initial distributions $u^1$ and $u^2$, as shown in~\Cref{fig:under2to1}. We also use $3000$ particles to implement the Wasserstein gradient flow of minimizing the KL divergence between the reference and the data distribution computed from the current iterate of the parameter distribution. Although the gradient flow started from neither of the initial distributions converges to the \textit{true} parameter distribution from which we generate the data, their corresponding data distributions match the reference and successfully achieve data fitting. This again demonstrates the intrinsic non-uniqueness of the inverse problem when the linear push-forward map $\mathsf{A}$ is under-determined.

\subsubsection{Over-Determined Case}
Next, we show the over-determined case with $\mathsf{A} =[2, 1]^\top$. We set the true parameter distribution to be $\mathcal{N}(0,1)$ and choose a reference data distribution polluted by random noise and thus is not in the range of the forward push-forward map. Inversion in this scenario is similar to the least-squares method in the deterministic case; see~\Cref{sec:theory}. We use $3000$ samples to represent the reference data distribution and $3000$ particles to implement the gradient flow method~\eqref{eqn:GF}. 

In~\Cref{fig:over1to2}, we plot the initial and final converged parameter distributions and the corresponding data distributions pushed forward by the forward map from those parameter distributions. We also show the true parameter distribution and the reference data distribution for comparison. The final data distribution from the recovered parameter distribution does not fit the reference data entirely. However, their marginal distributions along $y_\Amat$ (orthogonal projection of $y$ over the column space of $\Amat$) match exactly. This verifies our result in~\Cref{thm:over_determin_stoc}. 

\begin{figure}
    \centering
    \includegraphics[height = 3cm]{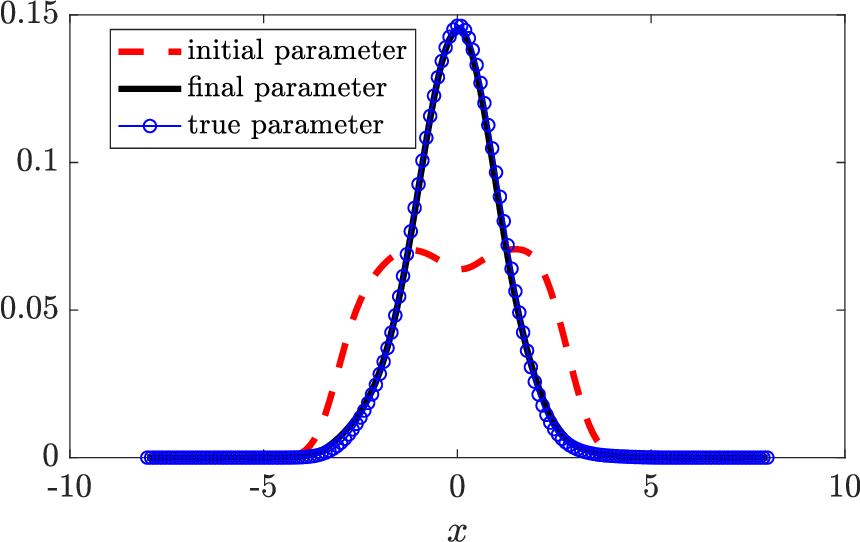} \includegraphics[height = 3.5cm]{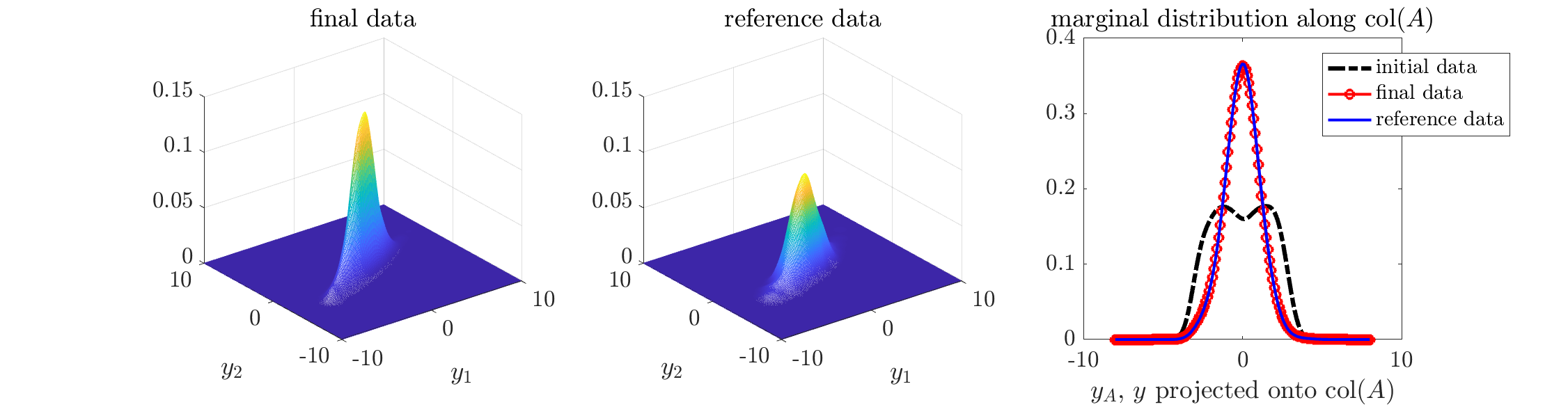}
    \caption{Over-determined case with the map $T(x) = \mathsf{A}x$ where $ \mathsf{A} = [2, 1]^\top$. Although the final recovered data distribution does not fit the reference entirely, their marginal distributions on $y_A$ match very well, as proved in~\Cref{thm:over_determin_stoc}.}
    \label{fig:over1to2}
\end{figure}

\subsection{An Inverse Problem Example}
Many inverse problems can be solved in our framework. Let us first consider a one-dimensional elliptic boundary value problem, a test case considered in~\cite{herty2018kinetic,garbuno2020interacting}:
\begin{alignat}{2} \label{eq:1D-eit}
 -\left(\exp(u_1)  p'(x) \right)' &= 1,\quad && x\in[0,1],\\
 p(0) &= 0, && p(1) = u_2. \nonumber
\end{alignat}
Its analytic solution can be written as 
\begin{equation}\label{eq:true sol 1D}
p(x) = u_2 x + \exp(-u_1)\left( - \frac{x^2}{2}  + \frac{x}{2}\right),
\end{equation}
which shows the log stability of this particular setup (parameter change in response to the PDE solution change). In the setup of~\cite[Eqn.~(4.4)]{garbuno2020interacting}, the authors consider the forward model mapping the two independent scalar coefficients $u_1$ and $u_2$ to the observed data $y_1:=p(x_1)$ and $y_2:=p(x_2)$ where $x_1 = 0.25$ and $x= 0.75$. 

Following the same setup, consider $u_1$ and $u_2$ are scalar-valued random variables, our observations $[y_1, y_2]^\top$ are also random in nature. We represent $\rho_y^\ast$ using $N=5000$ samples in this test and run the gradient flow simulation using $M=5000$ simulated particles. Two settings are considered. 
\begin{enumerate}[label=(\arabic*)]
    \item The true parameters $u_1\sim \mathcal{N}(0,0.5)$, and $u_2\sim \mathcal{U}([0,2])$. The initial guess $u_1^{(0)}, u_2^{(0)}$ both follow $\mathcal{N}(0,2)$.
    \item $u_1\sim  \mathcal{N}(-1.5,0.5)$, and $u_2\sim \mathcal{U}([0,2])$. The initial guess $u_1^{(0)}\sim \mathcal{U}([-3,-1])$ and $u_2^{(0)}\sim \mathcal{U}([0,2])$.  
\end{enumerate}
Since the analytic solution~\eqref{eq:true sol 1D} suggests stronger sensitivity of data on the negative values of $u_1$, we expect a better stability of the inverse problem in setting (2), given its ground-truth taking on a negative value with high probability. This is indeed what we observe from the experiments; see~\Cref{fig:EIT-1D-1,fig:EIT-1D-2}. In both cases, the data distributions are matched very well. The recovered parameter distribution, however, demonstrates different features: It is visually far away from the truth in setting (1) but is in much better agreement with the ground truth in setting (2).

\begin{figure}
    \centering    \includegraphics[width = 0.8\textwidth]{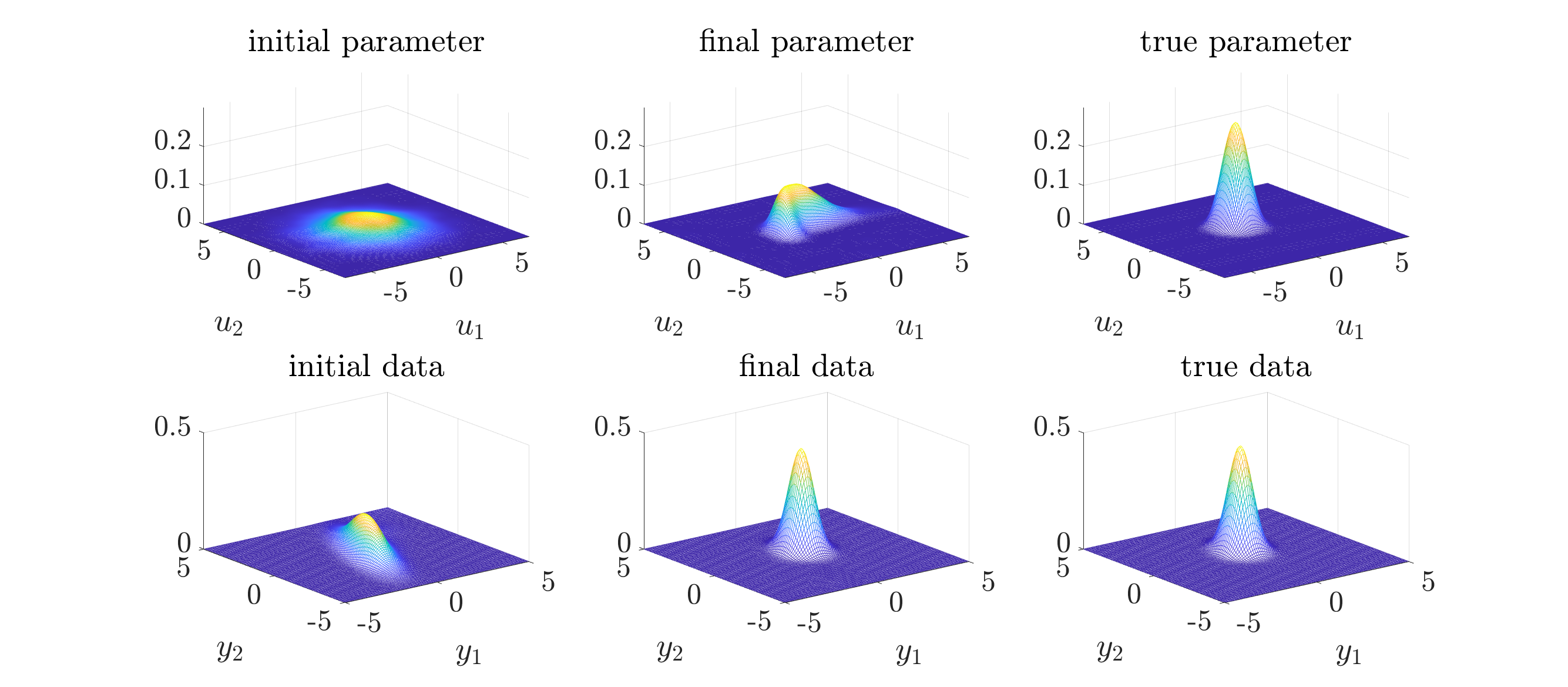}
    \caption{Numerical inversion based on the 1D diffusion equation~\eqref{eq:1D-eit} with setting (1).}\label{fig:EIT-1D-1}
\end{figure}

\begin{figure}
    \centering
   \includegraphics[width = 0.8\textwidth]{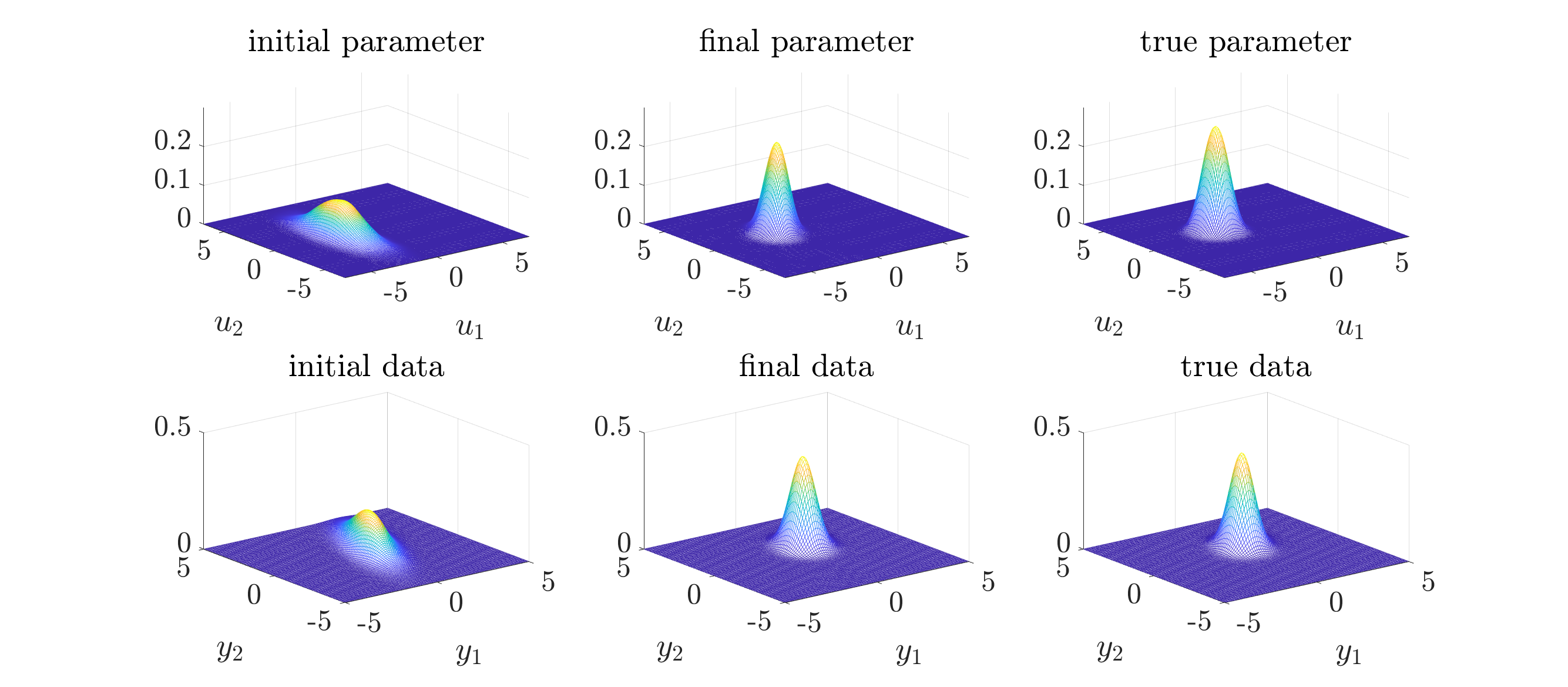}
    \caption{Numerical inversion based on the 1D diffusion equation~\eqref{eq:1D-eit} with setting (2).}\label{fig:EIT-1D-2}
\end{figure}

The higher-dimensional version of~\eqref{eq:1D-eit} becomes
\begin{alignat}{2}
-\nabla \cdot \left(a({\bf x})  \nabla  p({\bf x}) \right) &= f({\bf x}), \qquad \qquad     && {\bf x} \in D, \label{eq:EIT-2D}\\
p({\bf x}) &= \sin(2 x_1 \pi) \cos (4 x_2\pi), \qquad && {\bf x} = (x_1,x_2) \in \partial D. \nonumber
\end{alignat}
We consider $D = [0,1]^2$ and $a({\bf x}) = \exp\left( u_1 \phi_1({\bf x}) + u_2 \phi_2({\bf x}) \right)$ where $\phi_1({\bf x}) = \frac{10}{9+\pi^2} \cos(\pi x_1)$ and $\phi_2({\bf x}) = \frac{10}{9+2\pi^2} \cos(\pi (x_1+x_2))$, following a similar setup in~\cite[Sec.~4.4]{garbuno2020interacting}. Two observed empirical distributions $y_1 = p({\bf x}_1)$ and $y_2 = p({\bf x}_2)$ are used to perform the inversion where the receivers are located at ${\bf x}_1 = (0.25,1)^\top$ and ${\bf x}_2 = (1,0.5)^\top$. The true distributions are $u_1 \sim \mathcal{N}(1,1)$ and $u_2\sim \mathcal{U}([0,1])$ while the initial distribution $u_1^{(0)},u_2^{(0)}\sim \mathcal{U}([-2.5,2.5])$. We use $N = 5000$ particles for the observed empirical distribution and $M=1000$ for the simulation. Both the parameter reconstruction and its generated data distribution are close to the truth. The smooth-basis parameterization of $a({\bf x})$ significantly improves the well-posedness of the problem. The convergence history of the objective function (i.e., the KL divergence) for these three cases is plotted in Figure~\ref{fig:EIT-obj}.

\begin{figure}
    \centering
   \includegraphics[width = 0.8\textwidth]{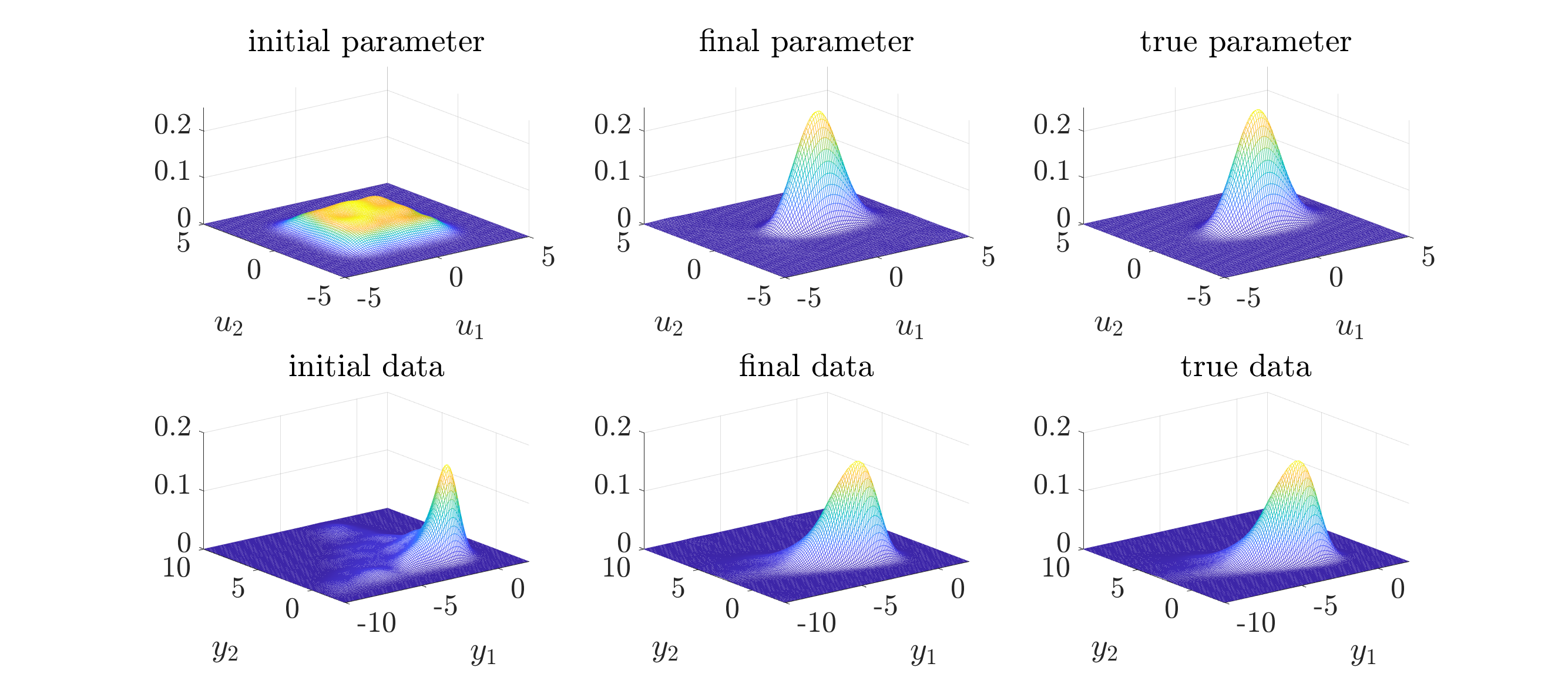}
    \caption{Numerical inversion based on the 2D diffusion equation~\eqref{eq:EIT-2D}.}\label{fig:EIT-2D-1}
\end{figure}

\begin{figure}
\centering
\subfloat[For~\Cref{fig:EIT-1D-1}]{\includegraphics[width = 0.3\textwidth]{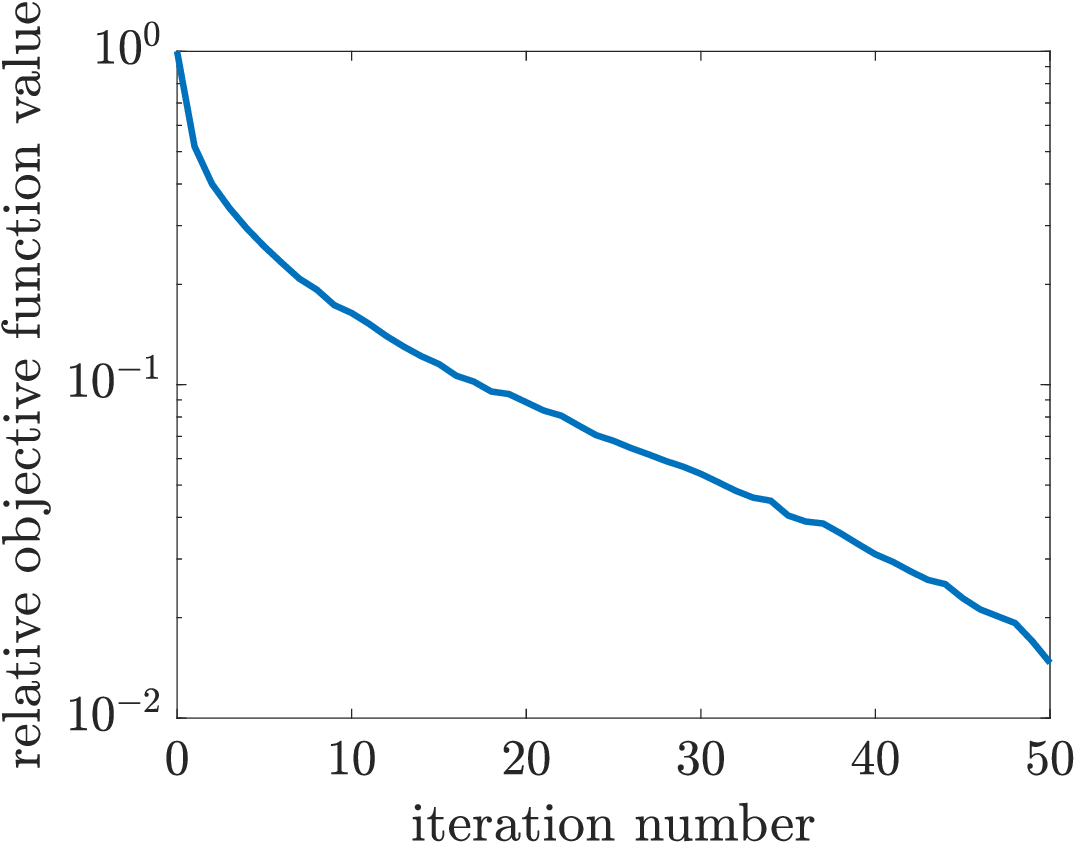}}
\subfloat[For~\Cref{fig:EIT-1D-2}]{\includegraphics[width = 0.3\textwidth]{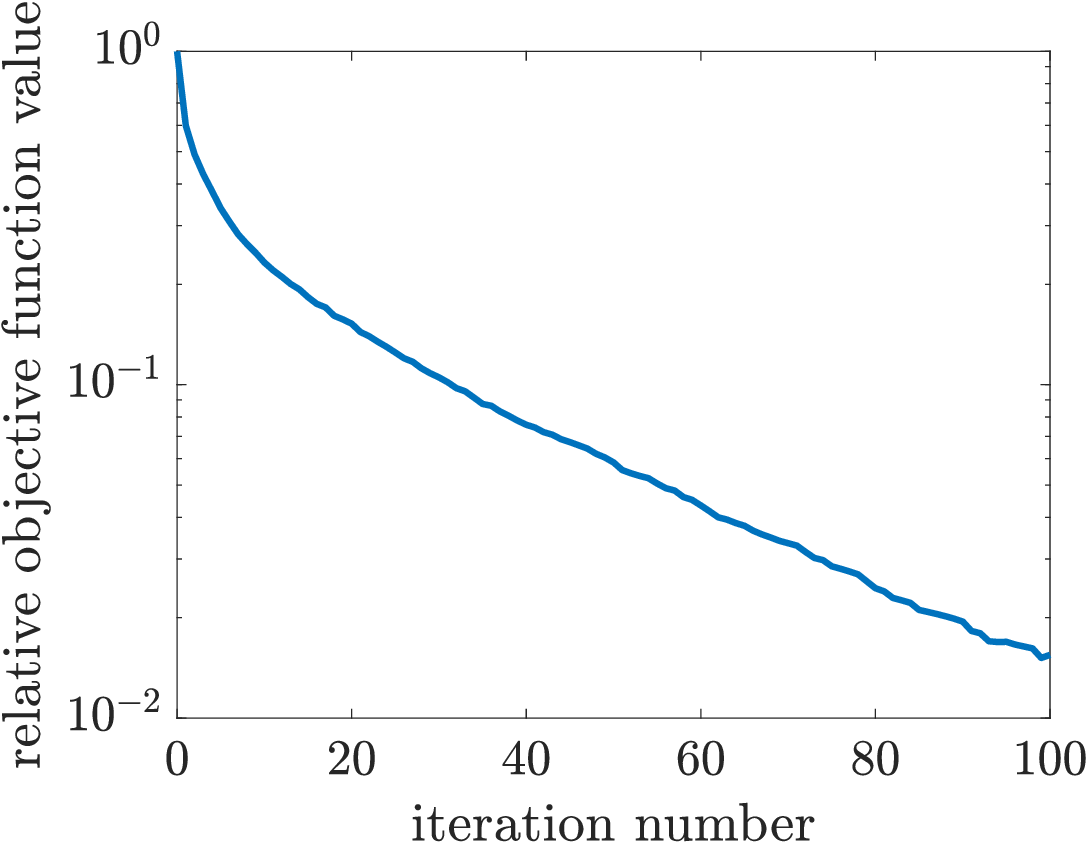}}
\subfloat[For~\Cref{fig:EIT-2D-1}]{\includegraphics[width = 0.3\textwidth]{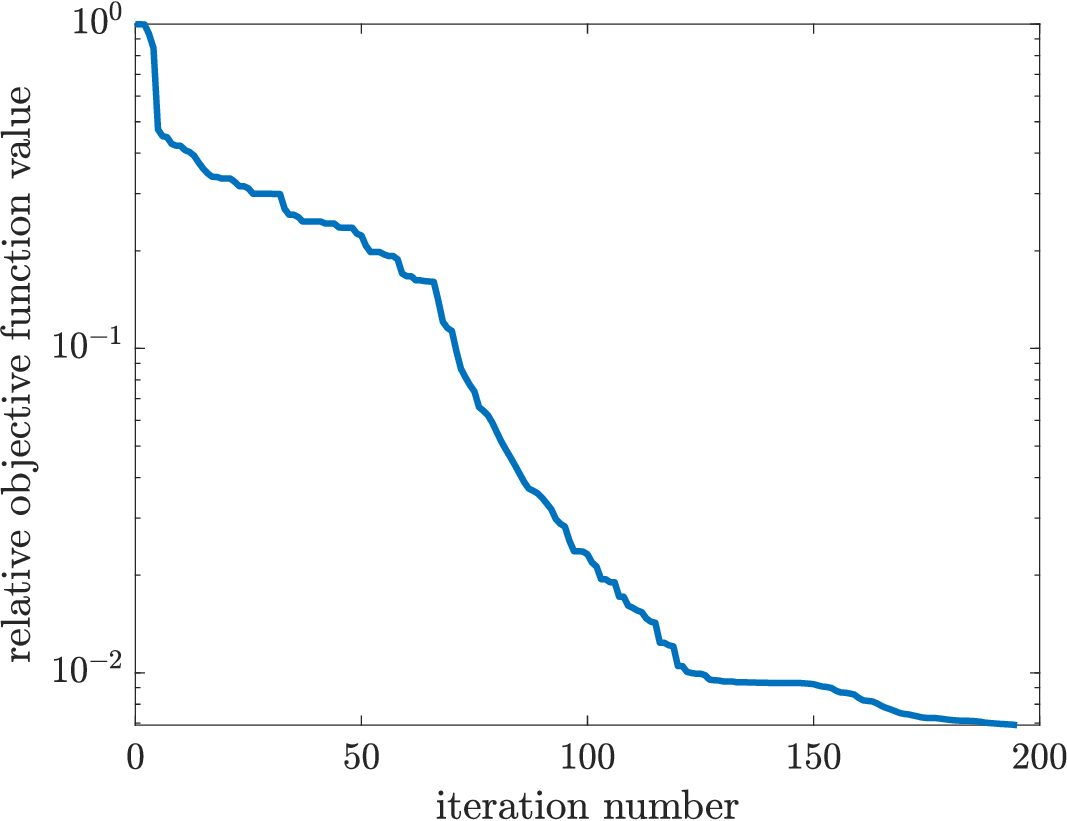}}
\caption{The objective function decay for the tests shown in~\Cref{fig:EIT-1D-1,fig:EIT-1D-2,fig:EIT-2D-1}.}
\label{fig:EIT-obj}
\end{figure}

\section{Conclusion and Discussions}\label{sec:conclude}
While most research in inverse problems focuses on deterministic unknowns, many real-world problems are inherently stochastic, introducing random variations in the parameters to be inferred. This necessitates a paradigm shift in research from optimizing a single, deterministic value to characterizing the full probability distribution that governs these parameters.

This paper presents a framework for conducting stochastic inverse problems. For linear push-forward maps, we also discuss the well-posedness theory both for the formulation and for the gradient-flow algorithm. Particle-based solvers are designed to simulate the gradient flow for finding the optimal probability measure.

The proposed approach shares many traits similar to Bayesian inversion. While it is true that both formulations return probability distributions of the unknown parameter, there is a stark difference between the two: the sources of randomness are different. In Bayesian inference, it is assumed there is randomness in the prior knowledge (encoded in the prior distribution) and there is measurement error (encoded in the likelihood function). On the contrary, our formulation assumes the data is devoid of any noise, attributing randomness solely to the parameters. To incorporate the randomness in the measurement error, we need to reformulate the problem as a probability over probability measure space, and seek for $\textrm{Law}\{\rho_u\}$, instead of $\rho_u$ itself. This would be a much more intricate problem, and we leave it to future research.

The work is just the beginning of our research endeavor to explore stochastic inverse problems. The rich geometries of probability spaces present a unique opportunity for computational solver designing. We expect different combinations of the data discrepancy and the metric deployed to measure probability distances would present different features of the problem.

\section*{Acknowledgement}
The authors thank Prof.~Youssef Marzouk, Prof.~Levon Nurbekyan,  Prof.~Kui Ren and Prof.~Andrew Stuart for all the valuable discussions.  We also thank the anonymous referees for their time and helpful suggestions.

\appendix
\section{Proof of~\Cref{prop:under_determine}} \label{app1}
\begin{proof}[Proof of~\Cref{prop:under_determine}]
The proof is simple algebra. We note that the source term in equation~\eqref{eqn:theta_ODE_determ} is always in the column space of $\mathsf{A}^\top$, hence the column space of $\mathsf{U}$, which leads to the fact that
\begin{equation}\label{eqn:theta_deter_trajectory}
u(t)\in u_0+\text{span}\{\mathsf{U}\}\,.
\end{equation}
Since the convergence of gradient descent is achieved when $u(t)\in\mathcal{S}$, we have 
\[
u_\text{f}\in \big\{ u_0+\text{span}\{\mathsf{U}\} \big\} \cap \big\{ u^\ast+\text{span}\{\mathsf{U}^\perp\}\big\}\,.
\]
Let $\mathsf{U} = \begin{bmatrix} u_1 & \ldots  & u_n  \end{bmatrix}$ and  $\mathsf{U}^\perp=\begin{bmatrix} u_{n+1} & \ldots & u_m \end{bmatrix}$, where the column vector $u_i \in \mathbb{R}^m$ and $\|u_i\|_2 = 1$, $1\leq i \leq n$.  Then we have that
\[
u_\text{f}=u_0 + \sum_{i=1}^n\lambda_iu_i=u^\ast - \sum_{i=n+1}^m\lambda_iu_i\,,
\]
making $\lambda_i = u_i^\top (u^\ast-u_0)$, which finalizes to
\[
u_\text{f}=u_0+\sum_{i=1}^n u_i^\top(u^\ast-u_0)u_i=u^\ast-\sum_{i=n+1}^m u_i^\top(u^\ast-u_0)u_i\,.
\]
Equivalently,
$
u_\text{f}=u_0-\mathsf{U}\mathsf{U}^\top(u_0-u^\ast)=u^\ast-\mathsf{U}^\perp(\mathsf{U}^\perp)^\top(u^\ast-u_0)$. Then the result follows from the following identity 
\[
\mathsf{U} \mathsf{U}^\top + \mathsf{U}^\perp (\mathsf{U}^\perp)^\top = \mathsf{I}\,,
\]
which follows from $\mathsf{U}^\top \mathsf{U}^\perp = 0$ and $(\mathsf{U}^\perp)^\top \mathsf{U} = 0$. Furthermore, given $y$ and $\tilde{y}$ in~\eqref{eqn:def_A_ex}, and the fact that the row space for $\mathsf{A}$ and $\tilde{\mathsf{A}}$ are $\mathsf{U}$ and $\mathsf{U}^\perp$, respectively, we have $y_\text{f} = \mathsf{A}u_\text{f} = y^\ast$ and $\tilde{y}_\text{f}=\tilde{\mathsf{A}} u_0$.
\end{proof}
The proof is rather straightforward. We quickly comment on its geometric interpretation. The result essentially is looking for the intersection of two sets. One is defined by the trajectory~\eqref{eqn:theta_deter_trajectory}, and the other is defined by the equilibrium~\eqref{eqn:equi_set_determ}. The intersection point is unique, with its $\mathsf{U}$ component determined by $u^\ast$ and the $\mathsf{U}^\perp$ component determined by the initial guess $u_0$. Note also that although $u^*$ is not unique, its projection to the column space of $\mathsf U$ is unique. Indeed, since $\mathsf{A} u^* =y^*$, we have $\mathsf{U}\mathsf{U}^\top u^\ast = \mathsf{U}\mathsf{S}^{-1}\mathsf{V}^\top y^*$, which is purely determined by $\mathsf{A}$ and the given $y^*$.

\bibliographystyle{siam}
\bibliography{ref}

\end{document}